\newtheorem{theorem}{Theorem}[section]
\newtheorem{corollary}[theorem]{Corollary}
\newtheorem{definition}[theorem]{Definition}
\newtheorem{proposition}[theorem]{Proposition}
\newtheorem{remark}[theorem]{Remark}
\newtheorem{example}[theorem]{Example}
\numberwithin{theorem}{section}
\begin{document}
\title[The Shilov boundary for a local operator system]{The Shilov boundary
for a local operator system}
\author{Maria Joi\c{t}a}
\address{Department of Mathematics, Faculty of Applied Sciences, University
Politehnica of Bucharest, 313 Spl. Independentei, 060042, Bucharest, Romania
and Department of Mathematics, Faculty of Mathematics and Computer Science,
University of Bucharest, Str. Academiei nr. 14, Bucharest, Romania}
\email{maria.joita@upb.ro and mjoita@fmi.unibuc.ro }
\urladdr{http://sites.google.com/a/g.unibuc.ro/maria-joita}
\thanks{This work was supported by a grant of UEFISCDI, project number
PN-III-P4-PCE-2021-0282}
\subjclass[2020]{ Primary 46L07}
\keywords{locally $C^{\ast }$-algebras; local operator systems; local
boundary representations; local boundary ideals}
\thanks{This paper is in final form and no version of it will be submitted
for publication elsewhere.}

\begin{abstract}
In this paper, we introduce the notion of Shilov boundary ideal for a local
operator system and investigate some of its properties.
\end{abstract}

\maketitle

\section{Introduction}

Let $\mathcal{M}$ be a uniformly linear subspace of $C(X)$, the $C^{\ast }$%
-algebra of all continuous complex valued functions on a compact Hausdorff
space $X$, which separates the points of $X$ and contains constants. A
subset $K$ of $X$ with the property that any function form $\mathcal{M}$
achieves its maximum modulus on $K$ is called a boundary for $\mathcal{M}$.
If\textbf{\ }$K$ is a boundary for $\mathcal{M}$, then $\mathcal{J}=\{f\in
C(X)/\left. f\right\vert _{K}=0\}$ is a closed two-sided $\ast $-ideal of $%
C(X)$ and the quotient map $\sigma :$ $C(X)\rightarrow C(X)/\mathcal{J}$ is
completely isometric on $\mathcal{M}$. The Shilov boundary for $\mathcal{M}$
is the smallest closed boundary. Arveson \cite{A} extends these notions in
the non-commutative case. A concrete operator system is a self adjoint
subspace $\mathcal{S}$ of a unital $C^{\ast }$-algebra $\mathcal{A}$ which
contains the unit of $\mathcal{A}$. Suppose that the $C^{\ast }$-subalgebra $%
C^{\ast }(\mathcal{S})$ of $\mathcal{A}$ generated by $\mathcal{S}$
coincides to $\mathcal{A}$. A boundary representation for $\mathcal{S}$ is
an irreducible representation $\pi $ of $\mathcal{A}$ with the property that 
$\pi $ is the unique unital completely positive map which extends $\left.
\pi \right\vert _{\mathcal{S}}$. Boundary representations are the analogues
of Choquet points in the commutative case. A closed two-sided $\ast $-ideal $%
\mathcal{J}$ of $\mathcal{A}$ is a boundary ideal for $\mathcal{S}$ if the
quotient map $\sigma :\mathcal{A}\rightarrow $ $\mathcal{A}/\mathcal{J}$ is
completely isometric on $\mathcal{S}$. A boundary ideal that contains all
other boundary ideals is called the Shilov ideal for $\mathcal{S}$. The
existence of boundary representations was an open problem for almost 40
years. In 2007, Arveson \cite{A2} shows that a separable operator system $%
\mathcal{S}$ has sufficiently many boundary representations, and the Shilov
ideal for $\mathcal{S}$ is intersection of the kernels of boundary
representations. G. Kleski \cite{K} improves the Arveson's result.

Effros and Webster \cite{EW} initiated a study of the locally convex version
of operator spaces called the local operator spaces. A locally $C^{\ast }$%
-algebra is a complete Hausdorff topological $\ast $-algebra whose topology
is defined by a directed family of $C^{\ast }$-seminorms. The $\ast $%
-algebras of unbounded linear operators are concrete models for locally $%
C^{\ast }$-algebras. A quantized domain\textit{\ }in a Hilbert space $%
\mathcal{H}$ is a triple $\{\mathcal{H};\mathcal{E};\mathcal{D}_{\mathcal{E}%
}\}$, where $\mathcal{E}=\{\mathcal{H}_{\delta };\delta \in \Delta \}$ is an
upward filtered family of closed subspaces of $\mathcal{H}$ with dense union 
$\mathcal{D}_{\mathcal{E}}=\bigcup\limits_{\delta \in \Delta }\mathcal{H}%
_{\delta }$ in $\mathcal{H}$. The collection of all linear operators $T:%
\mathcal{D}_{\mathcal{E}}\rightarrow \mathcal{D}_{\mathcal{E}}$ such that $T(%
\mathcal{H}_{\delta })\subseteq \mathcal{H}_{\mathcal{\delta }},T(\mathcal{H}%
_{\delta }^{\bot }\cap \mathcal{D}_{\mathcal{E}})\subseteq \mathcal{H}%
_{\delta }^{\bot }\cap \mathcal{D}_{\mathcal{E}}$ $\ $and$\left.
T\right\vert _{\mathcal{H}_{\delta }}\in B(\mathcal{H}_{\delta })$ for all $%
\delta \in \Delta ,$ denoted by $C^{\ast }(\mathcal{D}_{\mathcal{E}}),$ is a
locally $C^{\ast }$-algebra with the involution given by $T^{\ast }=\left.
T^{\bigstar }\right\vert _{\mathcal{D}_{\mathcal{E}}}$, where $T^{\bigstar }$
is the adjoint of the unbounded linear operator $T,$\ and the topology given
by the family of $C^{\ast }$-seminorms $\{\left\Vert \cdot \right\Vert
_{\delta }\}_{\delta \in \Delta }$, where $\left\Vert T\right\Vert _{\delta
}=\left\Vert \left. T\right\vert _{\mathcal{H}_{\delta }}\right\Vert _{B(%
\mathcal{H}_{\delta })}$. For every locally $C^{\ast }$-algebra $\mathcal{A}$
whose topology is defined by the family of $C^{\ast }$-seminorms $%
\{p_{\lambda }\}_{\lambda \in \Lambda }$, there is a quantized domain $\{%
\mathcal{H};\mathcal{E=}\{\mathcal{H}_{\lambda }\}_{\lambda \in \Lambda };%
\mathcal{D}_{\mathcal{E}}\}$ and a $\ast $-morphism $\pi :A\mathcal{%
\rightarrow }C^{\ast }(\mathcal{D}_{\mathcal{E}})$ such that $\left\Vert \pi
\left( a\right) \right\Vert _{\lambda }=p_{\lambda }\left( a\right) \ $for
all $a\in A$ and for all $\lambda \in \Lambda $ (Dosiev \cite{D} called it a
local isometric $\ast $-morphism). Dosiev \cite{DA} shows that a local
operator space can be realized as a subspace of $C^{\ast }(\mathcal{D}_{%
\mathcal{E}})$. Arunkumar \cite{Ar} considers the representations of locally 
$C^{\ast }$-algebras on quantized domains and he introduces the notion of
local boundary representation for a local operator system. In \cite{J24}, we
shows that the local boundary representations considered by Arunkumar are in
fact representations on Hilbert spaces. In this paper, we introduce the
notion of local Shilov boundary ideal for a local operator system and we
characterize it in terms of its admissible local boundary representations
(Theorem \ref{A}). Also, we show that, in the case of separable Fr\'{e}chet
local operators systems with trivial local Shilov boundary ideals, any
surjective unital local completely isometry between two local operator
systems is implemented by a local contractive $\ast $-isomorphism between
the locally $C^{\ast }$-algebras generated by them (Theorem \ref{D}).

\section{Preliminaries}

A \textit{\ $C^{\ast }$-convex algebra} $\mathcal{A}$ is a Hausdorff
topological $\ast $-algebra over $\mathbb{C}$ whose topology is determined
by an upward filtered family $\left\{ p_{\lambda }\right\} _{\lambda \in
\Lambda }$ of $C^{\ast }$-seminorms defined on $\mathcal{A}$ (that means, if 
$\lambda _{1}\leq \lambda _{2},$ then $p_{\lambda _{1}}\left( a\right) \leq $
$p_{\lambda _{2}}\left( a\right) $ for all $a\in \mathcal{A}$). A complete $%
C^{\ast }$-convex algebra is called a \textit{locally $C^{\ast }$-algebra }%
\cite{I}. A \textit{Fr\'{e}chet locally }$C^{\ast }$\textit{-algebra }is a
locally $C^{\ast }$-algebra whose topology is determined by a countable
family of $C^{\ast }$-seminorms. The topology on a Fr\'{e}chet locally $%
C^{\ast }$-algebra is unique.

A locally $C^{\ast }$-algebra $\mathcal{A}$ can be realized as a projective
limit of an inverse family of $C^{\ast }$-algebras. If $\mathcal{A}$ is a
locally $C^{\ast }$-algebra\textit{\ }with the topology determined by the
family of $C^{\ast }$-seminorms $\{p_{\lambda }\}_{\lambda \in \Lambda }$,
for each $\lambda \in \Lambda $, $\ker p_{\lambda }=\{a\in \mathcal{A}%
;p_{\lambda }\left( a\right) =0\}$ is a closed two sided $\ast $-ideal in $%
\mathcal{A}$ and $\mathcal{A}_{\lambda }=\mathcal{A}/\ker p_{\lambda }$ is a 
$C^{\ast }$-algebra with respect to the $C^{\ast }$-norm induced by $%
p_{\lambda }$. The canonical quotient $\ast $-morphism from $\mathcal{A}$ to 
$\mathcal{A}_{\lambda }$ is denoted by $\pi _{\lambda }^{\mathcal{A}}$. For
each $\lambda _{1},\lambda _{2}\in \Lambda $ with $\lambda _{1}\leq \lambda
_{2}$, there is a canonical surjective $\ast $-morphism $\pi _{\lambda
_{2}\lambda _{1}}^{\mathcal{A}}:$ $\mathcal{A}_{\lambda _{2}}\rightarrow 
\mathcal{A}_{\lambda _{1}}$, defined by $\pi _{\lambda _{2}\lambda _{1}}^{%
\mathcal{A}}\left( a+\ker p_{\lambda _{2}}\right) =a+\ker p_{\lambda _{1}}$
for $a\in \mathcal{A}$. Then, $\{\mathcal{A}_{\lambda },\pi _{\lambda
_{2}\lambda _{1}}^{\mathcal{A}}\}_{\lambda _{1}\leq \lambda _{2},\lambda
,\lambda _{1},\lambda _{2}\in \Lambda }$\ forms an inverse system of $%
C^{\ast }$-algebras, since $\pi _{\lambda _{1}}^{\mathcal{A}}=$ $\pi
_{\lambda _{2}\lambda _{1}}^{\mathcal{A}}\circ \pi _{\lambda _{2}}^{\mathcal{%
A}}$ whenever $\lambda _{1}\leq \lambda _{2}$. The projective limit%
\begin{equation*}
\lim\limits_{\underset{\lambda }{\leftarrow }}\mathcal{A}_{\lambda
}:=\{\left( a_{\lambda }\right) _{\lambda \in \Lambda }\in
\prod\limits_{\lambda \in \Lambda }\mathcal{A}_{\lambda };\pi _{\lambda
_{2}\lambda _{1}}^{\mathcal{A}}\left( a_{\lambda _{2}}\right) =a_{\lambda
_{1}}\text{ whenever }\lambda _{1}\leq \lambda _{2},\lambda _{1},\lambda
_{2}\in \Lambda \}
\end{equation*}%
of the inverse system of $C^{\ast }$-algebras $\{\mathcal{A}_{\lambda },\pi
_{\lambda _{2}\lambda _{1}}^{\mathcal{A}}\}_{\lambda _{1}\leq \lambda
_{2},\lambda ,\lambda _{1},\lambda _{2}\in \Lambda }$ is a locally $C^{\ast
} $-algebra that can be identified with $\mathcal{A}$ by the map $a\mapsto
\left( \pi _{\lambda }^{\mathcal{A}}\left( a\right) \right) _{\lambda \in
\Lambda }$.

Let $\mathcal{A}$ be a locally $C^{\ast }$-algebra whose topology is defined
by the family of $C^{\ast }$-seminorms $\{p_{\lambda }\}_{\lambda \in
\Lambda }.$

An element $a\in \mathcal{A}$ is called \textit{local self-adjoint} if $%
a=a^{\ast }+c$, where $c\in \mathcal{A}$ such that $p_{\lambda }\left(
c\right) =0$ for some $\lambda \in \Lambda $, and we call $a$ as $\lambda $%
\textit{-self-adjoint,} and \textit{local positive} if $a=b^{\ast }b+c$,
where $b,c\in $ $\mathcal{A}$ such that $p_{\lambda }\left( c\right) =0\ $
for some $\lambda \in \Lambda $, we call $a$ as $\lambda $\textit{-positive }%
and write\textit{\ }$a\geq _{\lambda }0$\textit{. }We\textit{\ }write $%
a=_{\lambda }0$ whenever $p_{\lambda }\left( a\right) =0$. Note that $a\in 
\mathcal{A}$ is local self-adjoint if and only if there is $\lambda \in
\Lambda $ such that $\pi _{\lambda }^{\mathcal{A}}\left( a\right) $ is self
adjoint in $\mathcal{A}_{\lambda }$ and $a\in \mathcal{A}$ is local positive
if and only if there is $\lambda \in \Lambda $ such that $\pi _{\lambda }^{%
\mathcal{A}}\left( a\right) $ is positive in $\mathcal{A}_{\lambda }.$

A \textit{local representation} of $\mathcal{A}$ on a Hilbert space $%
\mathcal{H}$ is a $\ast $-morphism $\pi :\mathcal{A\rightarrow }B(\mathcal{H}%
)$ with the property that there exists $\lambda \in \Lambda $ such that $%
\left\Vert \pi \left( a\right) \right\Vert \leq p_{\lambda }\left( a\right) $
for all $a\in \mathcal{A}$. So, $\pi :\mathcal{A\rightarrow }B(\mathcal{H})$
is a local representation for $\mathcal{A}$ if and only if $\pi $ is a
continuous $\ast $-morphism from $\mathcal{A}$ to $B(\mathcal{H})$ and it is
called a continuous $\ast $-representation of $\mathcal{A}$ on $\mathcal{H}$
in \cite[\S\ 13]{Fr}. A local representation $\pi :\mathcal{A\rightarrow }B(%
\mathcal{H})$ is irreducible if $\pi \left( A\right) ^{\prime }=\mathbb{C}$id%
$_{\mathcal{H}},$ where $\pi \left( A\right) ^{\prime }$ is the commutant of 
$\pi \left( A\right) $ in $B(\mathcal{H}).$ Moreover, a local representation 
$\pi :\mathcal{A\rightarrow }B(\mathcal{H})$ is irreducible if and only if
there exist $\lambda _{0}\in \Lambda $ and an irreducible representation $%
\pi _{\lambda _{0}}:\mathcal{A}_{\lambda _{0}}\mathcal{\rightarrow }B(%
\mathcal{H})$ of $\mathcal{A}_{\lambda _{0}}$ such that $\pi =\pi _{\lambda
_{0}}\circ \pi _{\lambda _{0}}^{\mathcal{A}}.$

A \textit{local operator system} is a self-adjoint subspace $\mathcal{S}$ of
a unital locally $C^{\ast }$-algebra $\mathcal{A}$ which contains the unity
of $\mathcal{A}$. For each $\lambda \in \Lambda ,$ $\mathcal{S}_{\lambda
}:=\pi _{\lambda }^{\mathcal{A}}\left( \mathcal{S}\right) $ is an operator
system in $\mathcal{A}_{\lambda },$ and $\mathcal{S}=\lim\limits_{\underset{%
\lambda }{\leftarrow }}\mathcal{S}_{\lambda }$. Moreover, if $\mathcal{S}$
generates $\mathcal{A}$, then, for each $\lambda \in \Lambda ,$ $\mathcal{S}%
_{\lambda }$ generates $\mathcal{A}_{\lambda }$.

For each $n\geq 1,$ $M_{n}(\mathcal{A})$ denotes the collection of all
matrices of order $n$ with elements in $\mathcal{A}$. Note that $M_{n}(%
\mathcal{A})$ is a locally $C^{\ast }$-algebra, the associated family of $%
C^{\ast }$-seminorms being denoted by $\{p_{\lambda }^{n}\}_{\lambda \in
\Lambda }$, and $p_{\lambda }^{n}\left( \left[ a_{ij}\right]
_{i,j=1}^{n}\right) =\left\Vert \left[ \pi _{\lambda }^{\mathcal{A}}\left(
a_{ij}\right) \right] _{i,j=1}^{n}\right\Vert _{M_{n}(\mathcal{A}_{\lambda
})}.$

For each $n\in \mathbb{N}$, the $n$-amplification of a linear map $\varphi :%
\mathcal{S}\rightarrow B(\mathcal{H})$ is the map $\varphi ^{\left( n\right)
}:M_{n}(\mathcal{S})$ $\rightarrow $ $B(\mathcal{H})$ defined by 
\begin{equation*}
\varphi ^{\left( n\right) }\left( \left[ s_{ij}\right] _{i,j=1}^{n}\right) =%
\left[ \varphi \left( s_{ij}\right) \right] _{i,j=1}^{n}
\end{equation*}%
for all $\left[ s_{ij}\right] _{i,j=1}^{n}\in M_{n}(\mathcal{S})$ .

An element $\left[ s_{ij}\right] _{i,j=1}^{n}$ is positive, respectively $%
\lambda $-positive in $M_{n}(\mathcal{S}),n\geq 1$ if it is positive,
respectively $\lambda $ -positive in $M_{n}(\mathcal{A}),n\geq 1.$

\begin{definition}
A linear map $\varphi :\mathcal{S}\rightarrow B(\mathcal{H})$ is called:

\begin{enumerate}
\item \textit{local positive} if there is $\lambda \in \Lambda $ such that $%
\varphi \left( a\right) \ $is positive in $B\left( \mathcal{H}\right) \ $
whenever $a$ is $\lambda $-positive\textit{\ } and $\varphi \left( a\right)
=0\ $whenever $a=_{\lambda }0;$

\item \textit{local completely positive (local }$\mathcal{CP}$\textit{) }if\
there\ is\ $\lambda \in \Lambda \ $ such\ that $\ \varphi ^{\left( n\right)
}\left( \left[ a_{ij}\right] _{i,j=1}^{n}\right) \ $is positive in $B\left( 
\mathcal{H}^{\oplus n}\right) $ whenever $\left[ \left( a_{ij}\right) \right]
_{i,j=1}^{n}$ is $\lambda $-positive\textit{\ } and $\varphi ^{\left(
n\right) }\left( \left[ a_{ij}\right] _{i,j=1}^{n}\right) =0\ \ $whenever $%
\left[ a_{ij}\right] _{i,j=1}^{n}=_{\lambda }0$,$\ $for all $n\geq 1$.
\end{enumerate}
\end{definition}

\section{Main results}

\subsection{Local completely isometric maps}

Let $\mathcal{A}_{1}$ and $\mathcal{A}_{2}$ be two unital locally $C^{\ast }$%
-algebras with the topologies given by the families of $C^{\ast }$-seminorms 
$\{p_{\lambda }\}_{\lambda \in \Lambda },$ respectively $\{q_{\lambda
}\}_{\lambda \in \Lambda },$ $\mathcal{S}_{1}\subseteq \mathcal{A}_{1}$ and $%
\mathcal{S}_{2}\subseteq \mathcal{A}_{2}$ be two local operator systems.

\begin{definition}
A linear map $\varphi :\mathcal{S}_{1}\rightarrow \mathcal{S}_{2}$ is called
an admissible \textit{local completely positive map if for each }$\lambda
\in \Lambda ,$ $\varphi ^{\left( n\right) }\left( \left[ s_{ij}\right]
_{i,j=1}^{n}\right) $ is $\lambda $-positive whenever $\left[ s_{ij}\right]
_{i,j=1}^{n}$ is $\lambda $-positive and $\varphi ^{\left( n\right) }\left( %
\left[ s_{ij}\right] _{i,j=1}^{n}\right) =_{\lambda }0$ whenever $\left[
s_{ij}\right] _{i,j=1}^{n}$ $=_{\lambda }0$ for all $n.$
\end{definition}

\begin{proposition}
\label{x}A linear map $\varphi :\mathcal{S}_{1}\rightarrow \mathcal{S}_{2}$
is admissible \textit{local completely positive }if and only if there exists
an inverse system $(\varphi _{\lambda })_{\lambda }$ of completely positive
maps such that $\varphi =\lim\limits_{\underset{\lambda }{\leftarrow }%
}\varphi _{\lambda }.$
\end{proposition}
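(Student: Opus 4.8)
The plan is to exploit the fact that, for a fixed index $\lambda$, the two conditions in the definition of ``admissible local completely positive'' are exactly what is needed for $\varphi$ to descend to a completely positive map of the quotient operator systems $(\mathcal{S}_{1})_{\lambda}\rightarrow(\mathcal{S}_{2})_{\lambda}$, and then to check that the descended maps respect the connecting $\ast$-morphisms. I will use freely that $\mathcal{S}_{i}=\lim\limits_{\underset{\lambda }{\leftarrow }}(\mathcal{S}_{i})_{\lambda}$ with $(\mathcal{S}_{i})_{\lambda}=\pi_{\lambda}^{\mathcal{A}_{i}}(\mathcal{S}_{i})$ an operator system in $(\mathcal{A}_{i})_{\lambda}$, and the descriptions from the Preliminaries: $[s_{ij}]_{i,j=1}^{n}\in M_{n}(\mathcal{S}_{1})$ is $\lambda$-positive iff $[\pi_{\lambda}^{\mathcal{A}_{1}}(s_{ij})]_{i,j=1}^{n}$ is positive in $M_{n}((\mathcal{A}_{1})_{\lambda})$, and $[s_{ij}]_{i,j=1}^{n}=_{\lambda}0$ iff $\pi_{\lambda}^{\mathcal{A}_{1}}(s_{ij})=0$ for all $i,j$ (and likewise for $\mathcal{S}_{2}$, $q_{\lambda}$).

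For the forward implication I would fix $\lambda$, use the $n=1$ kernel condition to see that $\varphi$ maps $\ker p_{\lambda}\cap\mathcal{S}_{1}$ into $\ker q_{\lambda}\cap\mathcal{S}_{2}$, and hence obtain a well-defined linear map $\varphi_{\lambda}:(\mathcal{S}_{1})_{\lambda}\rightarrow(\mathcal{S}_{2})_{\lambda}$ with $\varphi_{\lambda}\circ\pi_{\lambda}^{\mathcal{A}_{1}}|_{\mathcal{S}_{1}}=\pi_{\lambda}^{\mathcal{A}_{2}}|_{\mathcal{S}_{2}}\circ\varphi$. Given a positive $[t_{ij}]_{i,j=1}^{n}$ in $M_{n}((\mathcal{S}_{1})_{\lambda})$, I would lift it to $[s_{ij}]_{i,j=1}^{n}$ in $M_{n}(\mathcal{S}_{1})$ (which is then $\lambda$-positive), apply the positivity hypothesis on $\varphi$, and read off that $\varphi_{\lambda}^{(n)}([t_{ij}]_{i,j=1}^{n})$ is positive; since $n$ is arbitrary, $\varphi_{\lambda}$ is completely positive. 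Compatibility with the connecting morphisms, i.e.\ $\pi_{\lambda_{2}\lambda_{1}}^{\mathcal{A}_{2}}\circ\varphi_{\lambda_{2}}=\varphi_{\lambda_{1}}\circ\pi_{\lambda_{2}\lambda_{1}}^{\mathcal{A}_{1}}$ on $(\mathcal{S}_{1})_{\lambda_{2}}$ for $\lambda_{1}\leq\lambda_{2}$, I would get by precomposing with the surjection $\pi_{\lambda_{2}}^{\mathcal{A}_{1}}|_{\mathcal{S}_{1}}$ and chasing the identities $\pi_{\lambda_{1}}^{\mathcal{A}_{i}}=\pi_{\lambda_{2}\lambda_{1}}^{\mathcal{A}_{i}}\circ\pi_{\lambda_{2}}^{\mathcal{A}_{i}}$. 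The defining relation of $\varphi_{\lambda}$ together with the identification $\mathcal{S}_{i}=\lim\limits_{\underset{\lambda }{\leftarrow }}(\mathcal{S}_{i})_{\lambda}$ then says precisely that $\varphi=\lim\limits_{\underset{\lambda }{\leftarrow }}\varphi_{\lambda}$.

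For the converse I would start from $\varphi=\lim\limits_{\underset{\lambda }{\leftarrow }}\varphi_{\lambda}$, i.e.\ $\pi_{\lambda}^{\mathcal{A}_{2}}(\varphi(s))=\varphi_{\lambda}(\pi_{\lambda}^{\mathcal{A}_{1}}(s))$ for all $s\in\mathcal{S}_{1}$, $\lambda\in\Lambda$, and check both admissibility conditions at each $\lambda$ and each $n$ directly: if $[s_{ij}]_{i,j=1}^{n}=_{\lambda}0$ then $\pi_{\lambda}^{\mathcal{A}_{1}}(s_{ij})=0$, so $\pi_{\lambda}^{\mathcal{A}_{2}}(\varphi(s_{ij}))=0$ and $\varphi^{(n)}([s_{ij}]_{i,j=1}^{n})=_{\lambda}0$; if $[s_{ij}]_{i,j=1}^{n}$ is $\lambda$-positive then $[\pi_{\lambda}^{\mathcal{A}_{1}}(s_{ij})]_{i,j=1}^{n}$ is positive, so complete positivity of $\varphi_{\lambda}$ makes $[\pi_{\lambda}^{\mathcal{A}_{2}}(\varphi(s_{ij}))]_{i,j=1}^{n}=\varphi_{\lambda}^{(n)}([\pi_{\lambda}^{\mathcal{A}_{1}}(s_{ij})]_{i,j=1}^{n})$ positive, i.e.\ $\varphi^{(n)}([s_{ij}]_{i,j=1}^{n})$ is $\lambda$-positive.

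I expect no serious obstacle: the whole proof is the translation dictionary between the ``local'' conditions on $\varphi$ and ordinary positivity/kernel conditions on the $\varphi_{\lambda}$. The only mildly delicate point is the bookkeeping for the inverse-system compatibility of the $\varphi_{\lambda}$ (and the assertion that $\varphi$ is exactly the projective limit), which, as indicated, reduces to the functoriality identities $\pi_{\lambda_{1}}^{\mathcal{A}_{i}}=\pi_{\lambda_{2}\lambda_{1}}^{\mathcal{A}_{i}}\circ\pi_{\lambda_{2}}^{\mathcal{A}_{i}}$ together with surjectivity of $\pi_{\lambda_{2}}^{\mathcal{A}_{i}}$ onto $(\mathcal{S}_{i})_{\lambda_{2}}$.
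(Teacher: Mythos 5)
Your proposal is correct and follows essentially the same route as the paper: descend $\varphi$ to completely positive maps $\varphi_{\lambda}:(\mathcal{S}_{1})_{\lambda}\rightarrow(\mathcal{S}_{2})_{\lambda}$ satisfying $\varphi_{\lambda}\circ\left.\pi_{\lambda}^{\mathcal{A}_{1}}\right\vert_{\mathcal{S}_{1}}=\pi_{\lambda}^{\mathcal{A}_{2}}\circ\varphi$, check compatibility with the connecting morphisms, and note that the converse is a direct verification. The only difference is that you establish the one-level factorization by hand (well-definedness from the $n=1$ kernel condition, complete positivity by lifting positive matrices through the surjection $\left.\pi_{\lambda}^{\mathcal{A}_{1}}\right\vert_{\mathcal{S}_{1}}$), whereas the paper obtains it by citing Remark 3.1 of \cite{JM22}.
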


\begin{proof}
If $\varphi :\mathcal{S}_{1}\rightarrow \mathcal{S}_{2}$ is a local
completely positive map, then for each $\lambda \in \Lambda $, $\pi
_{\lambda }^{\mathcal{A}_{2}}\circ \varphi :\mathcal{S}_{1}\rightarrow
\left( \mathcal{S}_{2}\right) _{\lambda }$ is a local completely positive
map such that $\left\Vert \left( \pi _{\lambda }^{\mathcal{A}_{2}}\circ
\varphi \right) \left( s_{1}\right) \right\Vert _{\left( \mathcal{A}%
_{2}\right) _{\lambda }}=p_{\lambda }\left( s_{1}\right) $ for all $s_{1}\in 
\mathcal{S}_{1}$, and by \cite[Renark 3.1]{JM22}, there exists a completely
positive map $\varphi _{\lambda }:\left( \mathcal{S}_{1}\right) _{\lambda
}\rightarrow \left( \mathcal{S}_{2}\right) _{\lambda }$ such that $\varphi
_{\lambda }\circ \left. \pi _{\lambda }^{\mathcal{A}_{1}}\right\vert _{%
\mathcal{S}_{1}}=\pi _{\lambda }^{\mathcal{A}_{2}}\circ \varphi $. Clearly, $%
(\varphi _{\lambda })_{\lambda }$ is an inverse system of completely
positive maps and $\varphi =\lim\limits_{\underset{\lambda }{\leftarrow }%
}\varphi _{\lambda }$.

The converse implication is trivial.
\end{proof}

\begin{definition}
A linear map $\varphi :\mathcal{S}_{1}\rightarrow \mathcal{S}_{2}$ is called
a \textit{local} \textit{completely isometric} map if 
\begin{equation*}
q_{\lambda }^{n}\left( \left[ \varphi \left( s_{ij}\right) \right]
_{i,j=1}^{n}\right) =p_{\lambda }^{n}\left( \left[ s_{ij}\right]
_{i,j=1}^{n}\right)
\end{equation*}%
for all$\ \left[ s_{ij}\right] _{i,j=1}^{n}\in M_{n}\left( \mathcal{S}%
_{1}\right) $, for all $n\geq 1$ and for all $\lambda \in \Lambda .$
\end{definition}

\begin{remark}
If $\varphi :\mathcal{S}_{1}\rightarrow \mathcal{S}_{2}$ is local completely
isometric, then there exists a linear map\textit{\ }$\varphi ^{-1}$ $%
:\varphi \left( \mathcal{S}_{1}\right) \rightarrow \mathcal{S}_{1}$ such
that $\varphi ^{-1}\circ \varphi =$id$_{\mathcal{S}_{1}}$. Moreover, $%
\varphi ^{-1}$ is a local completely isometrc map.
\end{remark}

\begin{proposition}
A linear map $\varphi :\mathcal{S}_{1}\rightarrow \mathcal{S}_{2}$ is local
completely isometric if and only if there exists an inverse system $(\varphi
_{\lambda })_{\lambda }$ of completely isometric maps such that $\varphi
=\lim\limits_{\underset{\lambda }{\leftarrow }}\varphi _{\lambda }.$
\end{proposition}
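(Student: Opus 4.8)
The plan is to mimic the structure of the proof of Proposition \ref{x}, exploiting the projective-limit decomposition $\mathcal{S}_i = \lim\limits_{\underset{\lambda}{\leftarrow}} (\mathcal{S}_i)_\lambda$ together with the matrix-seminorm formula $p_\lambda^n([s_{ij}]) = \|[\pi_\lambda^{\mathcal{A}_1}(s_{ij})]\|_{M_n((\mathcal{A}_1)_\lambda)}$ recorded in the Preliminaries. First I would prove the forward implication. Suppose $\varphi$ is local completely isometric. Fix $\lambda \in \Lambda$. For $s \in \mathcal{S}_1$ with $\pi_\lambda^{\mathcal{A}_1}(s)=0$ we have $p_\lambda(s)=0$, hence $q_\lambda(\varphi(s))=p_\lambda(s)=0$, so $\pi_\lambda^{\mathcal{A}_2}(\varphi(s))=0$; this shows $\varphi$ drops to a well-defined linear map $\varphi_\lambda:(\mathcal{S}_1)_\lambda \to (\mathcal{S}_2)_\lambda$ with $\varphi_\lambda \circ \left.\pi_\lambda^{\mathcal{A}_1}\right|_{\mathcal{S}_1} = \left.\pi_\lambda^{\mathcal{A}_2}\right|_{\mathcal{S}_2} \circ \varphi$. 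Applying the matrix-seminorm formula, for $[s_{ij}] \in M_n(\mathcal{S}_1)$,
\begin{equation*}
\big\|\varphi_\lambda^{(n)}\big([\pi_\lambda^{\mathcal{A}_1}(s_{ij})]\big)\big\| = \big\|[\pi_\lambda^{\mathcal{A}_2}(\varphi(s_{ij}))]\big\| = q_\lambda^n([\varphi(s_{ij})]) = p_\lambda^n([s_{ij}]) = \big\|[\pi_\lambda^{\mathcal{A}_1}(s_{ij})]\big\|,
\end{equation*}
so $\varphi_\lambda$ is completely isometric on $(\mathcal{S}_1)_\lambda$. That $(\varphi_\lambda)_\lambda$ is an inverse system follows from the compatibility of the connecting maps $\pi_{\lambda_2\lambda_1}^{\mathcal{A}_i}$, exactly as in Proposition \ref{x}, and $\varphi = \lim\limits_{\underset{\lambda}{\leftarrow}} \varphi_\lambda$ by construction.

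For the converse, suppose $\varphi = \lim\limits_{\underset{\lambda}{\leftarrow}} \varphi_\lambda$ with each $\varphi_\lambda:(\mathcal{S}_1)_\lambda \to (\mathcal{S}_2)_\lambda$ completely isometric. Then for any $[s_{ij}] \in M_n(\mathcal{S}_1)$, using the compatibility relation $\left.\pi_\lambda^{\mathcal{A}_2}\right|_{\mathcal{S}_2}\circ\varphi = \varphi_\lambda\circ\left.\pi_\lambda^{\mathcal{A}_1}\right|_{\mathcal{S}_1}$ and again the matrix-seminorm formula,
\begin{equation*}
q_\lambda^n([\varphi(s_{ij})]) = \big\|[\pi_\lambda^{\mathcal{A}_2}(\varphi(s_{ij}))]\big\| = \big\|\varphi_\lambda^{(n)}([\pi_\lambda^{\mathcal{A}_1}(s_{ij})])\big\| = \big\|[\pi_\lambda^{\mathcal{A}_1}(s_{ij})]\big\| = p_\lambda^n([s_{ij}]),
\end{equation*}
for all $n \geq 1$ and all $\lambda \in \Lambda$, which is precisely local complete isometry.

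I do not anticipate a serious obstacle here: the statement is essentially a routine "everything passes to the projective limit" argument, and all the needed machinery — the identification $\mathcal{S}=\lim\limits_{\underset{\lambda}{\leftarrow}}\mathcal{S}_\lambda$, the explicit matrix seminorms, and the template from Proposition \ref{x} — is already in place. The one point requiring a little care is checking that $\varphi_\lambda$ is genuinely well-defined, i.e. that $\ker \left.\pi_\lambda^{\mathcal{A}_1}\right|_{\mathcal{S}_1}$ is sent into $\ker \left.\pi_\lambda^{\mathcal{A}_2}\right|_{\mathcal{S}_2}$; this is immediate from the $n=1$ case of the isometry hypothesis, as noted above. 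One should also remark that complete isometry of $\varphi_\lambda$ on the operator system $(\mathcal{S}_1)_\lambda$ is exactly the equality of all matrix norms, so no separate positivity argument is needed, in contrast to the completely positive case.
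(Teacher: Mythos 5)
Your proof is correct and follows exactly the route the paper intends: the paper's own proof is just the remark that it is ``similar to the proof of Proposition \ref{x},'' and your argument is the natural instantiation of that template, with the well-definedness of $\varphi_\lambda$ obtained directly from the $n=1$ isometry identity rather than by citing an external factorization lemma. No gaps.
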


\begin{proof}
It is similar to the proof of Proposition \ref{x}.
\end{proof}

From the above proposition and taking into account that a unital linear map $%
\varphi $ from an operator system $\mathcal{S}_{1}$ to an operator system $%
\mathcal{S}_{2}$ is a completely isometric map if and only if $\varphi $ is
an isometry and both $\varphi $ and $\varphi ^{-1}$ are completely positive
maps, we obtain the following result.

\begin{corollary}
Let $\varphi :\mathcal{S}_{1}\rightarrow \mathcal{S}_{2}$ be a unital linear
map. Then, $\varphi $ is a local completely isometric map if and only if $%
\varphi $ is a local isometry and both $\varphi $ and $\varphi ^{-1}:\varphi
\left( \mathcal{S}_{1}\right) \rightarrow \mathcal{S}_{1}$ are admissible
local completely positive maps.
\end{corollary}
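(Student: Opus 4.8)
The plan is to reduce the statement to the classical operator-system fact that is already recalled in the paragraph preceding the corollary, namely that a unital linear map between operator systems is completely isometric iff it is isometric with $\varphi$ and $\varphi^{-1}$ both completely positive. The bridge to the local setting is the inverse-limit machinery: by the Proposition just proved, $\varphi$ is local completely isometric iff $\varphi=\lim_{\leftarrow\lambda}\varphi_\lambda$ for an inverse system $(\varphi_\lambda)_\lambda$ of completely isometric maps $\varphi_\lambda:(\mathcal{S}_1)_\lambda\to(\mathcal{S}_2)_\lambda$; and by Proposition \ref{x} (together with the companion statement for isometries), $\varphi$ and $\varphi^{-1}$ are admissible local $\mathcal{CP}$, respectively local isometric, exactly when they arise as inverse limits of systems of completely positive, respectively isometric, maps. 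So the whole content is to check that the levelwise maps match up.

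First I would note that $\varphi$ unital forces each $\varphi_\lambda$ to be unital, since $\varphi_\lambda\circ\pi_\lambda^{\mathcal{A}_1}|_{\mathcal{S}_1}=\pi_\lambda^{\mathcal{A}_2}\circ\varphi$ and the quotient $\ast$-morphisms send unit to unit. For the forward direction: assume $\varphi$ is local completely isometric. Applying the Proposition gives an inverse system $(\varphi_\lambda)_\lambda$ of completely isometric maps with $\varphi=\lim_{\leftarrow}\varphi_\lambda$; in particular each $\varphi_\lambda$ is isometric, so $\varphi$ is a local isometry. Each $\varphi_\lambda$ is a unital completely isometric map between the operator systems $(\mathcal{S}_1)_\lambda$ and $(\mathcal{S}_2)_\lambda$, hence by the classical result both $\varphi_\lambda$ and $\varphi_\lambda^{-1}:\varphi_\lambda((\mathcal{S}_1)_\lambda)\to(\mathcal{S}_1)_\lambda$ are completely positive. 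The families $(\varphi_\lambda)_\lambda$ and $(\varphi_\lambda^{-1})_\lambda$ are inverse systems (compatibility with the connecting maps $\pi_{\lambda_2\lambda_1}$ is inherited from that of $(\varphi_\lambda)_\lambda$, using that $\varphi_{\lambda_1}$ is a bijection onto its range), so Proposition \ref{x} yields that $\varphi=\lim_{\leftarrow}\varphi_\lambda$ and $\varphi^{-1}=\lim_{\leftarrow}\varphi_\lambda^{-1}$ are admissible local $\mathcal{CP}$. Conversely, if $\varphi$ is a unital local isometry with $\varphi,\varphi^{-1}$ admissible local $\mathcal{CP}$, then Proposition \ref{x} decomposes $\varphi=\lim_{\leftarrow}\varphi_\lambda$ with $\varphi_\lambda$ completely positive, and likewise $\varphi^{-1}=\lim_{\leftarrow}\psi_\lambda$ with $\psi_\lambda$ completely positive; one checks $\psi_\lambda=\varphi_\lambda^{-1}$ on the relevant range using the intertwining relations, and that $\varphi_\lambda$ is isometric since $\varphi$ is a local isometry (i.e.\ $q_\lambda(\varphi(s))=p_\lambda(s)$ descends to the quotients). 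Then each $\varphi_\lambda$ is unital, isometric, with $\varphi_\lambda$ and $\varphi_\lambda^{-1}$ completely positive, hence completely isometric by the classical characterization; applying the Proposition to the system $(\varphi_\lambda)_\lambda$ gives that $\varphi$ is local completely isometric.

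The one point requiring a little care — and the likeliest source of friction — is the bookkeeping around $\varphi^{-1}$ and the ranges $\varphi(\mathcal{S}_1)$, $\varphi_\lambda((\mathcal{S}_1)_\lambda)$: one must confirm that $\varphi(\mathcal{S}_1)$ is itself a local operator system in $\mathcal{A}_2$ with $(\varphi(\mathcal{S}_1))_\lambda=\varphi_\lambda((\mathcal{S}_1)_\lambda)$, that the inverse limit of the systems $\varphi_\lambda((\mathcal{S}_1)_\lambda)$ recovers $\varphi(\mathcal{S}_1)$, and that the two candidate inverses agree at each level so that $\lim_{\leftarrow}\varphi_\lambda^{-1}$ is well-defined and equals $\varphi^{-1}$. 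All of this is a routine consequence of $\varphi_\lambda\circ\pi_\lambda^{\mathcal{A}_1}|_{\mathcal{S}_1}=\pi_\lambda^{\mathcal{A}_2}\circ\varphi$, injectivity of $\varphi$ (guaranteed by the Remark since $\varphi$ is in particular local completely isometric, or in the converse direction by the local isometry hypothesis), and the description $\mathcal{S}=\lim_{\leftarrow}\mathcal{S}_\lambda$ recalled in the Preliminaries. I would therefore state the proof compactly: invoke the preceding Proposition to pass to the inverse system, invoke Proposition \ref{x} in both directions to translate admissible local $\mathcal{CP}$ into levelwise complete positivity, and invoke the cited classical operator-system characterization at each level $\lambda$.
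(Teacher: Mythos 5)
Your proposal is correct and follows essentially the same route as the paper, which derives the corollary in one sentence from the preceding proposition (local completely isometric $\Leftrightarrow$ inverse limit of completely isometric maps) together with the classical operator-system characterization applied at each level $\lambda$. The extra bookkeeping you flag (levelwise unitality, the inverse system of the maps $\varphi_\lambda^{-1}$, and the identification of $\varphi(\mathcal{S}_1)_\lambda$ with $\varphi_\lambda((\mathcal{S}_1)_\lambda)$) is exactly the routine verification the paper leaves implicit.
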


\begin{remark}
If $\varphi :\mathcal{A}_{1}\rightarrow \mathcal{A}_{2}$ is a local
isometric $\ast $-morphism, then $\varphi $ is local completely isometric.
\end{remark}

\subsection{Admissible local boundary representations}

Let $\mathcal{A}$ be a unital locally $C^{\ast }$-algebra whose topology is
given the family of $C^{\ast }$-seminorms $\{p_{\lambda }\}_{\lambda \in
\Lambda }$ and $\mathcal{S}\subseteq \mathcal{A}$ be a local operator system
such that the locally $C^{\ast }$-subalgebra $C^{\ast }(\mathcal{S})$ of $%
\mathcal{A}$ generated by $\mathcal{S}$ coincides to $\mathcal{A}$.

\begin{remark}
Let $\varphi :\mathcal{S}\rightarrow B(\mathcal{H})$ be a unital local $%
\mathcal{CP}$-map. By \cite[Remark 3.1]{JM22} there exist $\lambda _{0}\in
\Lambda $ and a unital $\mathcal{CP}$-map $\varphi _{\lambda _{0}}:\mathcal{S%
}_{\lambda _{0}}\rightarrow B\mathcal{(H)}$ such that $\varphi =\varphi
_{\lambda _{0}}\circ \left. \pi _{\lambda _{0}}^{\mathcal{A}}\right\vert _{%
\mathcal{S}}\ $. Since $\varphi _{\lambda _{0}}:\mathcal{S}_{\lambda
_{0}}\rightarrow B\mathcal{(H)}$ is completely positive, by Arveson's
extension theorem \cite[Theorem 1.2.3]{A}, there exists a unital $\mathcal{CP%
}$-map $\widetilde{\varphi _{\lambda _{0}}}:\mathcal{A}_{\lambda
_{0}}\rightarrow B\mathcal{(H)}$ such that $\left. \widetilde{\varphi
_{\lambda _{0}}}\right\vert _{\mathcal{S}_{\lambda _{0}}}=\varphi _{\lambda
_{0}}$. Let $\widetilde{\varphi }=$ $\widetilde{\varphi _{\lambda _{0}}}%
\circ \pi _{\lambda _{0}}^{\mathcal{A}}$. Clearly, $\widetilde{\varphi }$ is
a local unital $\mathcal{CP}$-map from $\mathcal{A}$ to $B(\mathcal{H})$
such that $\left. \widetilde{\varphi }\right\vert _{\mathcal{S}}=\varphi $
and 
\begin{equation*}
\left\Vert \widetilde{\varphi }\left( a\right) \right\Vert =\left\Vert 
\widetilde{\varphi _{\lambda _{0}}}\left( \pi _{\lambda _{0}}^{\mathcal{A}%
}\left( a\right) \right) \right\Vert \leq \left\Vert \pi _{\lambda _{0}}^{%
\mathcal{A}}\left( a\right) \right\Vert _{\mathcal{A}_{\lambda _{0}}}=
p_{\lambda _{0}}\left( a\right)
\end{equation*}%
for all $a\in \mathcal{A}$.
\end{remark}

\begin{definition}
A linear map $\pi :\mathcal{A}\rightarrow B(\mathcal{H})$ is an admissible
local boundary representation for $\mathcal{S}$ if:

\begin{enumerate}
\item $\pi $ is a local irreducible representation;

\item there exists $\lambda _{0}\in \Lambda $ such that $\pi $ is the unique
unital local completely positive map that extends $\left. \pi \right\vert _{%
\mathcal{S}}$ and $\left\Vert \pi \left( a\right) \right\Vert \leq
p_{\lambda _{0}}\left( a\right) $ for all $a\in \mathcal{A}$.
\end{enumerate}
\end{definition}

\begin{proposition}
\label{c} Let $\pi :\mathcal{A}\rightarrow B(\mathcal{H})$ be a linear map.
Then, $\pi $ is an admissible local boundary representation for $\mathcal{S}$
if and only if there exist $\lambda _{0}\in \Lambda $ and a boundary
representation $\pi _{\lambda _{0}}:\mathcal{A}_{\lambda _{0}}\rightarrow B(%
\mathcal{H})$ for $\mathcal{S}_{\lambda _{0}}$ such that $\pi =\pi _{\lambda
_{0}}\circ \pi _{\lambda _{0}}^{\mathcal{A}}$.
\end{proposition}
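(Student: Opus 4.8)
The plan is to prove both implications by exploiting the ``local equals projective limit'' philosophy that pervades the preliminaries, together with the characterization of irreducible local representations already recorded in the paper.

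For the forward direction, suppose $\pi:\mathcal{A}\rightarrow B(\mathcal{H})$ is an admissible local boundary representation for $\mathcal{S}$. Condition (2) of the definition supplies a $\lambda_{0}\in\Lambda$ with $\left\Vert \pi(a)\right\Vert\leq p_{\lambda_{0}}(a)$ for all $a\in\mathcal{A}$; since $\ker p_{\lambda_{0}}\subseteq\ker\pi$, the universal property of the quotient yields a $\ast$-morphism $\pi_{\lambda_{0}}:\mathcal{A}_{\lambda_{0}}\rightarrow B(\mathcal{H})$ with $\pi=\pi_{\lambda_{0}}\circ\pi_{\lambda_{0}}^{\mathcal{A}}$. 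Because $\pi$ is a local irreducible representation, the characterization quoted in the preliminaries (a local representation is irreducible iff it factors through an irreducible representation of some $\mathcal{A}_{\lambda}$) together with the fact that $\pi_{\lambda_{0}}^{\mathcal{A}}$ is surjective forces $\pi_{\lambda_{0}}(\mathcal{A}_{\lambda_{0}})'=\pi(\mathcal{A})'=\mathbb{C}\,\mathrm{id}_{\mathcal{H}}$, so $\pi_{\lambda_{0}}$ is irreducible. It remains to check that $\pi_{\lambda_{0}}$ is a boundary representation for $\mathcal{S}_{\lambda_{0}}$, i.e.\ that it is the unique unital completely positive extension of $\left.\pi_{\lambda_{0}}\right\vert_{\mathcal{S}_{\lambda_{0}}}$. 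Here I would argue by contradiction: given a unital $\mathcal{CP}$-map $\psi_{\lambda_{0}}:\mathcal{A}_{\lambda_{0}}\rightarrow B(\mathcal{H})$ agreeing with $\pi_{\lambda_{0}}$ on $\mathcal{S}_{\lambda_{0}}$, the composite $\psi:=\psi_{\lambda_{0}}\circ\pi_{\lambda_{0}}^{\mathcal{A}}$ is a unital local $\mathcal{CP}$-map on $\mathcal{A}$ with $\left\Vert\psi(a)\right\Vert\leq p_{\lambda_{0}}(a)$ and $\left.\psi\right\vert_{\mathcal{S}}=\left.\pi\right\vert_{\mathcal{S}}$; uniqueness in condition (2) gives $\psi=\pi$, and since $\pi_{\lambda_{0}}^{\mathcal{A}}$ is onto we conclude $\psi_{\lambda_{0}}=\pi_{\lambda_{0}}$, as required.

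For the converse, suppose $\pi=\pi_{\lambda_{0}}\circ\pi_{\lambda_{0}}^{\mathcal{A}}$ where $\pi_{\lambda_{0}}$ is a boundary representation for $\mathcal{S}_{\lambda_{0}}$. Then $\pi$ is a $\ast$-morphism, it is continuous with $\left\Vert\pi(a)\right\Vert=\left\Vert\pi_{\lambda_{0}}(\pi_{\lambda_{0}}^{\mathcal{A}}(a))\right\Vert\leq\left\Vert\pi_{\lambda_{0}}^{\mathcal{A}}(a)\right\Vert_{\mathcal{A}_{\lambda_{0}}}\leq p_{\lambda_{0}}(a)$, and it is irreducible by the same characterization of irreducible local representations (now used in the easy direction). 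For the uniqueness clause, let $\psi:\mathcal{A}\rightarrow B(\mathcal{H})$ be any unital local $\mathcal{CP}$-map with $\left.\psi\right\vert_{\mathcal{S}}=\left.\pi\right\vert_{\mathcal{S}}$ and $\left\Vert\psi(a)\right\Vert\leq p_{\lambda_{0}}(a)$; the norm bound lets $\psi$ factor as $\psi=\psi_{\lambda_{0}}\circ\pi_{\lambda_{0}}^{\mathcal{A}}$ for a unital $\mathcal{CP}$-map $\psi_{\lambda_{0}}$ on $\mathcal{A}_{\lambda_{0}}$ (invoke the Remark preceding the definition, or the factorization argument of \cite[Remark 3.1]{JM22}). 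Since $\psi_{\lambda_{0}}$ and $\pi_{\lambda_{0}}$ agree on $\mathcal{S}_{\lambda_{0}}=\pi_{\lambda_{0}}^{\mathcal{A}}(\mathcal{S})$ and $\pi_{\lambda_{0}}$ is a boundary representation, we get $\psi_{\lambda_{0}}=\pi_{\lambda_{0}}$, hence $\psi=\pi$.

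The main obstacle is the subtlety in condition (2): the admissibility definition only requires uniqueness among unital local $\mathcal{CP}$-maps \emph{bounded by the particular seminorm} $p_{\lambda_{0}}$, so one must be careful that in both directions the competing extension can genuinely be arranged to satisfy that $p_{\lambda_{0}}$-bound before invoking uniqueness — this is exactly why the factorization through $\mathcal{A}_{\lambda_{0}}$ has to be done with the \emph{same} index $\lambda_{0}$ rather than an arbitrary one, and why surjectivity of $\pi_{\lambda_{0}}^{\mathcal{A}}$ (recorded in the preliminaries, since $\mathcal{S}_{\lambda_{0}}$ generates $\mathcal{A}_{\lambda_{0}}$) is essential to transfer equality of maps from $\mathcal{A}$ down to $\mathcal{A}_{\lambda_{0}}$ and back. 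Everything else is a routine assembly of facts already in the excerpt.
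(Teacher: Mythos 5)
Your proof is correct and follows essentially the same route as the paper's: factor $\pi$ through $\mathcal{A}_{\lambda_{0}}$ using the seminorm bound from condition (2), and pass competing unital (local) $\mathcal{CP}$-extensions back and forth along the surjection $\pi_{\lambda_{0}}^{\mathcal{A}}$ in both directions. Your explicit commutant argument for the irreducibility of $\pi_{\lambda_{0}}$, and your insistence on keeping the same index $\lambda_{0}$ throughout, merely make precise steps the paper leaves implicit.
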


\begin{proof}
Let $\pi :\mathcal{A}\rightarrow B(\mathcal{H})$ be an admissible local
boundary representation for $\mathcal{S}$. Then there exist $\lambda _{0}\in
\Lambda $ and an irreducible representation $\pi _{\lambda _{0}}:\mathcal{A}%
_{\lambda _{0}}\rightarrow B(\mathcal{H})$ such that $\pi =\pi _{\lambda
_{0}}\circ \pi _{\lambda _{0}}^{\mathcal{A}}$. Let $\varphi _{\lambda _{0}}:%
\mathcal{A}_{\lambda _{0}}\rightarrow B(\mathcal{H})$ be a unital $\mathcal{%
CP}$-map such that $\left. \varphi _{\lambda _{0}}\right\vert _{\mathcal{S}%
_{\lambda _{0}}}=\left. \pi _{\lambda _{0}}\right\vert _{\mathcal{S}%
_{\lambda _{0}}}$. Then $\varphi =\varphi _{\lambda _{0}}\circ \pi _{\lambda
_{0}}^{\mathcal{A}}$ is a unital local $\mathcal{CP}$-map. Moreover, 
\begin{equation*}
\left\Vert \varphi \left( a\right) \right\Vert =\left\Vert \left( \varphi
_{\lambda _{0}}\circ \pi _{\lambda _{0}}^{\mathcal{A}}\right) \left(
a\right) \right\Vert \leq \left\Vert \pi _{\lambda _{0}}^{\mathcal{A}}\left(
a\right) \right\Vert _{\mathcal{A}_{\lambda _{0}}}=p_{\lambda _{0}}\left(
a\right)
\end{equation*}%
for all $a\in \mathcal{A}$ and%
\begin{equation*}
\left. \varphi \right\vert _{\mathcal{S}}=\left. \varphi _{\lambda
_{0}}\right\vert _{\mathcal{S}_{\lambda _{0}}}\circ \left. \pi _{\lambda
_{0}}^{\mathcal{A}}\right\vert _{\mathcal{S}}=\left. \pi _{\lambda
_{0}}\right\vert _{\mathcal{S}_{\lambda _{0}}}\circ \left. \pi _{\lambda
_{0}}^{\mathcal{A}}\right\vert _{\mathcal{S}}=\left. \pi \right\vert _{%
\mathcal{S}}
\end{equation*}%
whence, since $\pi $ is an admissible local boundary representation for $%
\mathcal{S}$, we deduce that $\varphi =\pi $. Consequently, $\varphi
_{\lambda _{0}}=\pi _{\lambda _{0}},$ and so, $\pi _{\lambda _{0}}$ is a
boundary representation for $\mathcal{S}_{\lambda _{0}}$.

Conversely, suppose that there exist $\lambda _{0}\in \Lambda $ and a
boundary representation $\pi _{\lambda _{0}}:\mathcal{A}_{\lambda
_{0}}\rightarrow B(\mathcal{H})$ for $\mathcal{S}_{\lambda _{0}}$ such that $%
\pi =\pi _{\lambda _{0}}\circ \pi _{\lambda _{0}}^{\mathcal{A}}$. Then $\pi $
is an irreducible local representation of $\mathcal{A}$. Let $\varphi :%
\mathcal{A}\rightarrow B(\mathcal{H})$ be a unital local $\mathcal{CP}$-map
such that $\left\Vert \varphi \left( a\right) \right\Vert \leq p_{\lambda
_{0}}\left( a\right) $ for all $a\in \mathcal{A}$ and $\left. \varphi
\right\vert _{\mathcal{S}}=\left. \pi \right\vert _{\mathcal{S}}$. Then
there exists a unital $\mathcal{CP}$-map $\varphi _{\lambda _{0}}:\mathcal{A}%
_{\lambda _{0}}\rightarrow B(\mathcal{H})$ such that $\varphi =\varphi
_{\lambda _{0}}\circ \pi _{\lambda _{0}}^{\mathcal{A}}$. Moreover, since $%
\left. \varphi \right\vert _{\mathcal{S}}=\left. \pi \right\vert _{\mathcal{S%
}}$, we have $\left. \varphi _{\lambda _{0}}\right\vert _{\mathcal{S}%
_{\lambda _{0}}}=\left. \pi _{\lambda _{0}}\right\vert _{\mathcal{S}%
_{\lambda _{0}}}$, and since $\pi _{\lambda _{0}}$ is a boundary
representation for $\mathcal{S}_{\lambda _{0}}$, $\varphi _{\lambda
_{0}}=\pi _{\lambda _{0}}$. Therefore, $\varphi =\pi $, and consequently, $%
\pi $ is an admissible local boundary representation for $\mathcal{S}$.
\end{proof}

The collection of all admissible local boundary representations for a local
operator system $\mathcal{S}$ is called the \textit{admissible} \textit{%
local non-commutative Choquet boundary for $\mathcal{S}$, }and it is denoted
by\textit{\ }Ch$(\mathcal{S})$. We denote by Ch$_{\lambda }(\mathcal{S})$
the collection of all admissible local boundary representations $\pi :%
\mathcal{A}\rightarrow B(\mathcal{H})$ for $\mathcal{S}$ with the property
that $\left\Vert \pi \left( a\right) \right\Vert \leq p_{\lambda }\left(
a\right) $ for all $a\in \mathcal{A}$. If Ch$\left( \mathcal{S}_{\lambda
}\right) $ is the collection of all boundary representations for $\mathcal{S}%
_{\lambda }$, then, according to Proposition \ref{c}, there exists a
bijective correspondence between Ch$\left( \mathcal{S}_{\lambda }\right) $
and Ch$_{\lambda }(\mathcal{S})$.

\begin{proposition}
If $\mathcal{S}$ is separable, then it has sufficiently many admissible
local boundary representations, in the sense that, for each $\lambda \in
\Lambda $, for all $n\geq 1$ and for all $\left[ s_{ij}\right]
_{i,j=1}^{n}\in M_{n}\left( S\right) $, 
\begin{equation*}
p_{\lambda }^{n}\left( \left[ s_{ij}\right] _{i,j=1}^{n}\right) =\max
\left\{ \left\Vert \left[ \pi \left( s_{ij}\right) \right]
_{i,j=1}^{n}\right\Vert _{B(\mathcal{H}^{\oplus n})};\pi \in \text{Ch}%
_{\lambda }(\mathcal{S})\text{ }\right\} .
\end{equation*}
\end{proposition}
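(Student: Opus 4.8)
The plan is to reduce the identity to the $C^\ast$-algebra level $\mathcal{A}_{\lambda}$, invoke the separable non-commutative Choquet theory of Arveson and Kleski there, and then transport the result back through the bijective correspondence between $\text{Ch}(\mathcal{S}_{\lambda})$ and $\text{Ch}_{\lambda}(\mathcal{S})$ recorded just before the statement.

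First I would fix $\lambda \in \Lambda$. Since $\mathcal{S}$ is separable and $\pi_{\lambda}^{\mathcal{A}}$ is continuous, $\mathcal{S}_{\lambda}=\pi_{\lambda}^{\mathcal{A}}(\mathcal{S})$ is a separable operator system, and as recalled in the Preliminaries it generates $\mathcal{A}_{\lambda}$; hence $\mathcal{A}_{\lambda}=C^\ast(\mathcal{S}_{\lambda})$ is a separable $C^\ast$-algebra. By Arveson's theorem that a separable operator system has sufficiently many boundary representations \cite{A2}, together with Kleski's refinement \cite{K} that the resulting supremum is attained, for every $n\geq 1$ and every $\left[ a_{ij}\right] _{i,j=1}^{n}\in M_{n}(\mathcal{S}_{\lambda})$ we have
\begin{equation*}
\left\Vert \left[ a_{ij}\right] _{i,j=1}^{n}\right\Vert _{M_{n}(\mathcal{A}_{\lambda})}=\max\left\{ \left\Vert \left[ \rho\left( a_{ij}\right) \right] _{i,j=1}^{n}\right\Vert ;\rho\in\text{Ch}(\mathcal{S}_{\lambda})\right\}.
\end{equation*}
Now, for $\left[ s_{ij}\right] _{i,j=1}^{n}\in M_{n}(\mathcal{S})$, the description of $p_{\lambda}^{n}$ gives $p_{\lambda}^{n}\left( \left[ s_{ij}\right] _{i,j=1}^{n}\right) =\left\Vert \left[ \pi_{\lambda}^{\mathcal{A}}\left( s_{ij}\right) \right] _{i,j=1}^{n}\right\Vert _{M_{n}(\mathcal{A}_{\lambda})}$, with $\left[ \pi_{\lambda}^{\mathcal{A}}\left( s_{ij}\right) \right] _{i,j=1}^{n}\in M_{n}(\mathcal{S}_{\lambda})$. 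Applying the displayed equality and using the correspondence $\rho\mapsto\rho\circ\pi_{\lambda}^{\mathcal{A}}$ of Proposition~\ref{c} (taking $\lambda_{0}=\lambda$), which is a bijection from $\text{Ch}(\mathcal{S}_{\lambda})$ onto $\text{Ch}_{\lambda}(\mathcal{S})$ and satisfies $\left[ \left( \rho\circ\pi_{\lambda}^{\mathcal{A}}\right) \left( s_{ij}\right) \right] _{i,j=1}^{n}=\left[ \rho\left( \pi_{\lambda}^{\mathcal{A}}\left( s_{ij}\right) \right) \right] _{i,j=1}^{n}$, we obtain exactly
\begin{equation*}
p_{\lambda}^{n}\left( \left[ s_{ij}\right] _{i,j=1}^{n}\right) =\max\left\{ \left\Vert \left[ \pi\left( s_{ij}\right) \right] _{i,j=1}^{n}\right\Vert _{B(\mathcal{H}^{\oplus n})};\pi\in\text{Ch}_{\lambda}(\mathcal{S})\right\}.
\end{equation*}

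The inequality $p_{\lambda}^{n}\left( \left[ s_{ij}\right] _{i,j=1}^{n}\right) \geq\left\Vert \left[ \pi\left( s_{ij}\right) \right] _{i,j=1}^{n}\right\Vert$ for each $\pi\in\text{Ch}_{\lambda}(\mathcal{S})$ is immediate, since $\left\Vert \pi\left( a\right) \right\Vert \leq p_{\lambda}\left( a\right)$ and complete positivity makes the amplifications contractive at the corresponding matrix level; and the reverse inequality, \emph{with a supremum in place of the maximum}, already follows from the fact that the Shilov boundary ideal of $\mathcal{S}_{\lambda}$ is the intersection of the kernels of its boundary representations, so that $\mathcal{S}_{\lambda}$ embeds completely isometrically into a product of the $B(\mathcal{H}_{\rho})$. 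The one genuinely non-routine point — and the step I expect to be the main obstacle — is upgrading this supremum to an attained maximum for matrices of all sizes; this is precisely Kleski's improvement \cite{K} of Arveson's separable theorem, and once it is granted at the level of $\mathcal{A}_{\lambda}$ everything else is bookkeeping with the quotient $\ast$-morphisms $\pi_{\lambda}^{\mathcal{A}}$ and with Proposition~\ref{c}.
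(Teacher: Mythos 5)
Your proposal is correct and follows essentially the same route as the paper: fix $\lambda$, note that $\mathcal{S}_{\lambda}$ is a separable operator system generating $\mathcal{A}_{\lambda}$, apply Kleski's theorem (the attained maximum over boundary representations at every matrix level) to $\mathcal{S}_{\lambda}\subseteq\mathcal{A}_{\lambda}$, and transport the identity back via $p_{\lambda}^{n}=\left\Vert\cdot\right\Vert_{M_{n}(\mathcal{A}_{\lambda})}\circ\left(\pi_{\lambda}^{\mathcal{A}}\right)^{(n)}$ together with the bijection $\rho\mapsto\rho\circ\pi_{\lambda}^{\mathcal{A}}$ between $\text{Ch}(\mathcal{S}_{\lambda})$ and $\text{Ch}_{\lambda}(\mathcal{S})$ from Proposition~\ref{c}. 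The only difference is presentational: your added discussion of the easy inequality and of the supremum-versus-maximum issue is sound but not needed once Kleski's result is invoked, which is exactly what the paper does.
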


\begin{proof}
Let $\lambda \in \Lambda $. Since $\mathcal{S}$ is separable, $\mathcal{S}%
_{\lambda }$ is separable, and by \cite[Theorem 3.1]{K},%
\begin{eqnarray*}
p_{\lambda }^{n}\left( \left[ s_{ij}\right] _{i,j=1}^{n}\right)
&=&\left\Vert \left[ \pi _{\lambda }^{\mathcal{A}}\left( s_{ij}\right) %
\right] _{i,j=1}^{n}\right\Vert _{M_{n}(\mathcal{A}_{\lambda })} \\
&=&\max \left\{ \left\Vert \left[ \pi _{\lambda }\left( \pi _{\lambda }^{%
\mathcal{A}}\left( s_{ij}\right) \right) \right] _{i,j=1}^{n}\right\Vert _{B(%
\mathcal{H}^{\oplus n})};\pi _{\lambda }\in \text{Ch}\left( \mathcal{S}%
_{\lambda }\right) \right\} \\
&=&\max \left\{ \left\Vert \left[ \pi \left( s_{ij}\right) \right]
_{i,j=1}^{n}\right\Vert _{B(\mathcal{H}^{\oplus n})};\pi \in \text{Ch}%
_{\lambda }(\mathcal{S})\text{ }\right\}
\end{eqnarray*}%
for all for all $\left[ s_{ij}\right] _{i,j=1}^{n}\in M_{n}\left( S\right) $
and $n\geq 1$.
\end{proof}

\begin{remark}
A \textit{local boundary representation} for $\mathcal{S}$ is an irreducible
local representation $\pi :$ $\mathcal{A}\rightarrow $ $B(\mathcal{H})$ with
the property that it is the unique local completely positive extension of
the unital local $\mathcal{CP}$-map $\left. \pi \right\vert _{\mathcal{S}}$
to $\mathcal{A}$ \cite[Definition 3.9]{J24}. If $\pi :\mathcal{A}\rightarrow
B(\mathcal{H})$ is a local boundary representation for $\mathcal{S}$, then $%
\pi $ is an admissible local boundary representation for $\mathcal{S}$ \cite[%
Proposition 3.11]{J24}.
\end{remark}

\begin{example}
Let $\{X_{n};i_{nm}:X_{n}\hookrightarrow X_{m};n\leq m;n,m\in \mathbb{N}\}$
be an inductive system of Hausdorff compact spaces and $X:=\lim\limits_{%
\underset{n}{\rightarrow }}X_{n}$. Then $C(X):=\{f:X\rightarrow \mathbb{C};f$
is continuous$\}$ is a unital commutative \textit{Fr\'{e}chet locally }$%
C^{\ast }$-algebra with respect to the topology defined by the family of $%
C^{\ast }$-seminorms $\{p_{n}\}_{n\in \mathbb{N}}$, where $p_{n}\left(
f\right) =\sup \{\left\vert f\left( x\right) \right\vert ;x\in X_{n}\}$.
Moreover, $C(X)$ can be identified with $\lim\limits_{\underset{n}{%
\leftarrow }}C\left( X_{n}\right) $ \cite{Ph}. According to Proposition \ref%
{c}, we obtained that the admissible local Choquet boundary for $C(X)$ is $X$%
, and so, it coincides with the local Choquet boundary defined in \cite{J24}.
\end{example}

As in the case of operator systems, we show that the admissible local
boundary representations are intrinsic invariants for local operator systems
(see \cite[Theorem 4.2 ]{J24} and \cite[Theorem 2.1.2]{A}).

\begin{proposition}
\label{B} Let $\mathcal{A}_{1}$ and $\mathcal{A}_{2}$ be two unital locally $%
C^{\ast }$-algebras with the topologies given by the families of $C^{\ast }$%
-seminorms $\{p_{\lambda }\}_{\lambda \in \Lambda },$ respectively $%
\{q_{\lambda }\}_{\lambda \in \Lambda },$ $\mathcal{S}_{1}\subseteq \mathcal{%
A}_{1}$ and $\mathcal{S}_{2}\subseteq \mathcal{A}_{2}$ be two local operator
systems such that $\mathcal{S}_{1}$ generates $\mathcal{A}_{1}$ and $%
\mathcal{S}_{2}\ $generates $\mathcal{A}_{2}$. If $\varphi :\mathcal{S}%
_{1}\rightarrow \mathcal{S}_{2}$ is a surjective\textbf{\ }unital local
completely isometric map, then for each admissible local boundary
representation $\pi _{1}:\mathcal{A}_{1}\rightarrow B(\mathcal{H})$ for $%
\mathcal{S}_{1}$, there exists an admissible local boundary representation $%
\pi _{2}:\mathcal{A}_{2}\rightarrow B(\mathcal{H})$ for $\mathcal{S}_{2}$
such that $\pi _{2}\circ \varphi =\left. \pi _{1}\right\vert _{\mathcal{S}%
_{1}}$.
\end{proposition}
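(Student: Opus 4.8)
The plan is to transfer the problem to a single $C^{\ast}$-algebra level and then invoke Arveson's theorem on the intrinsic character of boundary representations. First I would apply Proposition \ref{c}: since $\pi_{1}$ is an admissible local boundary representation for $\mathcal{S}_{1}$, there are $\lambda_{0}\in\Lambda$ and a boundary representation $\pi_{1,\lambda_{0}}\colon(\mathcal{A}_{1})_{\lambda_{0}}\rightarrow B(\mathcal{H})$ for $(\mathcal{S}_{1})_{\lambda_{0}}$ with $\pi_{1}=\pi_{1,\lambda_{0}}\circ\pi_{\lambda_{0}}^{\mathcal{A}_{1}}$. Next, using the characterization of local completely isometric maps as projective limits of completely isometric maps (the Proposition stated above), I would write $\varphi=\lim\limits_{\underset{\lambda}{\leftarrow}}\varphi_{\lambda}$ with each $\varphi_{\lambda}\colon(\mathcal{S}_{1})_{\lambda}\rightarrow(\mathcal{S}_{2})_{\lambda}$ completely isometric and $\varphi_{\lambda}\circ\left.\pi_{\lambda}^{\mathcal{A}_{1}}\right\vert_{\mathcal{S}_{1}}=\pi_{\lambda}^{\mathcal{A}_{2}}\circ\varphi$. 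I would then check that the component $\varphi_{\lambda_{0}}$ at this particular index is unital (because $\varphi$ is) and surjective, since $\varphi_{\lambda_{0}}\bigl((\mathcal{S}_{1})_{\lambda_{0}}\bigr)=\varphi_{\lambda_{0}}\bigl(\pi_{\lambda_{0}}^{\mathcal{A}_{1}}(\mathcal{S}_{1})\bigr)=\pi_{\lambda_{0}}^{\mathcal{A}_{2}}\bigl(\varphi(\mathcal{S}_{1})\bigr)=\pi_{\lambda_{0}}^{\mathcal{A}_{2}}(\mathcal{S}_{2})=(\mathcal{S}_{2})_{\lambda_{0}}$; and I would record that, since $\mathcal{S}_{i}$ generates $\mathcal{A}_{i}$, the operator system $(\mathcal{S}_{i})_{\lambda_{0}}$ generates $(\mathcal{A}_{i})_{\lambda_{0}}$ for $i=1,2$ (a fact noted in the preliminaries).

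With these verifications in hand, the hypotheses of \cite[Theorem 2.1.2]{A} are in place for the surjective unital complete isometry $\varphi_{\lambda_{0}}$ and the boundary representation $\pi_{1,\lambda_{0}}$, so there is a boundary representation $\pi_{2,\lambda_{0}}\colon(\mathcal{A}_{2})_{\lambda_{0}}\rightarrow B(\mathcal{H})$ for $(\mathcal{S}_{2})_{\lambda_{0}}$ with $\pi_{2,\lambda_{0}}\circ\varphi_{\lambda_{0}}=\left.\pi_{1,\lambda_{0}}\right\vert_{(\mathcal{S}_{1})_{\lambda_{0}}}$. I would then set $\pi_{2}:=\pi_{2,\lambda_{0}}\circ\pi_{\lambda_{0}}^{\mathcal{A}_{2}}$; by the converse implication of Proposition \ref{c}, $\pi_{2}$ is an admissible local boundary representation for $\mathcal{S}_{2}$. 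The intertwining relation then drops out by chaining the identities already at hand: $\pi_{2}\circ\varphi=\pi_{2,\lambda_{0}}\circ\pi_{\lambda_{0}}^{\mathcal{A}_{2}}\circ\varphi=\pi_{2,\lambda_{0}}\circ\varphi_{\lambda_{0}}\circ\left.\pi_{\lambda_{0}}^{\mathcal{A}_{1}}\right\vert_{\mathcal{S}_{1}}=\left.\pi_{1,\lambda_{0}}\right\vert_{(\mathcal{S}_{1})_{\lambda_{0}}}\circ\left.\pi_{\lambda_{0}}^{\mathcal{A}_{1}}\right\vert_{\mathcal{S}_{1}}=\left.\pi_{1}\right\vert_{\mathcal{S}_{1}}$.

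The only substantial ingredient is Arveson's theorem; the remainder is bookkeeping with the projective-limit structure, parallel to \cite[Theorem 4.2]{J24}. The step I expect to demand the most care is confirming that $\varphi_{\lambda_{0}}$ is indeed a \emph{surjective unital} complete isometry and that passing to the quotients by $\ker p_{\lambda_{0}}$ and $\ker q_{\lambda_{0}}$ preserves the property that the operator system generates the ambient $C^{\ast}$-algebra, since these are precisely the facts that legitimize the appeal to \cite[Theorem 2.1.2]{A}.
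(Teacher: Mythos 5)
Your proposal is correct and follows essentially the same route as the paper's proof: factor $\pi_{1}$ through a boundary representation at some level $\lambda_{0}$ via Proposition \ref{c}, push $\varphi$ down to a surjective unital complete isometry $\varphi_{\lambda_{0}}$ between the quotient operator systems, apply Arveson's Theorem 2.1.2, and lift back. The additional verifications you flag (surjectivity and unitality of $\varphi_{\lambda_{0}}$, and that $(\mathcal{S}_{i})_{\lambda_{0}}$ generates $(\mathcal{A}_{i})_{\lambda_{0}}$) are left implicit in the paper but are exactly the right points to check.
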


\begin{proof}
Let $\pi _{1}:\mathcal{A}_{1}\rightarrow B(\mathcal{H})$ be an admissible
boundary representation for $\mathcal{S}_{1}$. Then, there exist $\lambda
_{0}\in \Lambda $ and a boundary representation $\left( \pi _{1}\right)
_{\lambda _{0}}:\left( \mathcal{A}_{1}\right) _{\lambda _{0}}\rightarrow B(%
\mathcal{H})$ such that $\pi _{1}=\left( \pi _{1}\right) _{\lambda
_{0}}\circ \pi _{\lambda _{0}}^{\mathcal{A}_{1}}$. Since $\varphi :\mathcal{S%
}_{1}\rightarrow \mathcal{S}_{2}$ is a surjective unital local completely
isometric map, there exists a surjective unital completely isometric map $%
\varphi _{\lambda _{0}}:$ $\left( \mathcal{S}_{1}\right) _{\lambda
_{0}}\rightarrow \left( \mathcal{S}_{2}\right) _{\lambda _{0}}\ $such that $%
\varphi _{\lambda _{0}}\circ \left. \pi _{\lambda _{0}}^{\mathcal{A}%
_{1}}\right\vert _{\mathcal{S}_{1}}=\pi _{\lambda _{0}}^{\mathcal{A}%
_{2}}\circ \varphi $,$\ $and by \cite[Theorem 2.1.2]{A}, there exists a
boundary representation $\left( \pi _{2}\right) _{\lambda _{0}}:\left( 
\mathcal{A}_{2}\right) _{\lambda _{0}}\rightarrow B(\mathcal{H})$ such that $%
\left( \pi _{2}\right) _{\lambda _{0}}\circ \varphi _{\lambda _{0}}=\left.
\left( \pi _{1}\right) _{\lambda _{0}}\right\vert _{\left( \mathcal{S}%
_{1}\right) _{\lambda _{0}}}$. Then $\pi _{2}:=\left( \pi _{2}\right)
_{\lambda _{0}}\circ \pi _{\lambda _{0}}^{\mathcal{A}_{2}}$ is an admissible
local boundary representation for $\mathcal{S}_{2}$ and 
\begin{equation*}
\pi _{2}\circ \varphi =\left( \pi _{2}\right) _{\lambda _{0}}\circ \pi
_{\lambda _{0}}^{\mathcal{A}_{2}}\circ \varphi =\left( \pi _{2}\right)
_{\lambda _{0}}\circ \varphi _{\lambda _{0}}\circ \left. \pi _{\lambda
_{0}}^{\mathcal{A}_{1}}\right\vert _{\mathcal{S}_{1}}=\left. \left( \pi
_{1}\right) _{\lambda _{0}}\right\vert _{\left( \mathcal{S}_{1}\right)
_{\lambda _{0}}}\circ \left. \pi _{\lambda _{0}}^{\mathcal{A}%
_{1}}\right\vert _{\mathcal{S}_{1}}=\left. \pi _{1}\right\vert _{\mathcal{S}%
_{1}}.
\end{equation*}
\end{proof}

\subsection{The local Shilov boundary ideal for a local operator system}

Let $\mathcal{A}$ be a locally $C^{\ast }$-algebra whose topology is defined
by the family of $C^{\ast }$-seminorms $\{p_{\lambda }\}_{\lambda \in
\Lambda }$. Suppose that $\mathcal{I}$ is a\ closed two sided $\ast $-ideal
of $\mathcal{A}$. Then the quotient $\ast $-algebra $\mathcal{A}/\mathcal{I}$
is a $C^{\ast }$-convex algebra with respect to the family of $C^{\ast }$%
-seminorms $\left\{ \widehat{p}_{\lambda }\right\} _{\lambda \in \Lambda }$,
where $\widehat{p}_{\lambda }\left( a+\mathcal{I}\right) =\inf \{p_{\lambda
}\left( a+b\right) ;b\in \mathcal{I}\}$. If $\mathcal{A}$ is a Fr\'{e}chet
locally $C^{\ast }$-algebra{}, then $\mathcal{A}/\mathcal{I}$ is a locally Fr%
\'{e}chet $C^{\ast }$-algebra \cite[Theorem 2.7]{I}. Moreover, for each $%
\lambda \in \Lambda $, the $C^{\ast }$-algebras $\overline{\left( \mathcal{A}%
/\mathcal{I}\right) _{\lambda }}$ and $\mathcal{A}_{\lambda }/\mathcal{I}%
_{\lambda }$ are isomorphic, where $\overline{\left( \mathcal{A}/\mathcal{I}%
\right) _{\lambda }}$ is the completion of the normed $\ast $-algebra $%
\left( \mathcal{A}/\mathcal{I}\right) /\ker \widehat{p}_{\lambda }$ and $%
\mathcal{I}_{\lambda }$ is the closure of $\pi _{\lambda }^{\mathcal{A}%
}\left( \mathcal{I}\right) $ in $\mathcal{A}_{\lambda }$. Therefore, the
locally $C^{\ast }$-algebra $\overline{\mathcal{A}/\mathcal{I}}$ can be
identified with $\lim\limits_{\underset{\lambda }{\leftarrow }}\mathcal{A}%
_{\lambda }/\mathcal{I}_{\lambda }$, and the canonical $\ast $-morphism $%
\sigma :\mathcal{A}\rightarrow \mathcal{A}/\mathcal{I}$ is an inverse limit $%
\ast $-morphism, $\sigma =\lim\limits_{\underset{\lambda }{\leftarrow }%
}\sigma _{\lambda }$, where $\sigma _{\lambda }$ is the canonical morphism
from $\mathcal{A}_{\lambda }$ onto $\mathcal{A}_{\lambda }/\mathcal{I}%
_{\lambda }\ $for all $\lambda \in \Lambda $.

Let $\mathcal{S}\subseteq \mathcal{A}$ be a local operator system such that
the locally $C^{\ast }$-subalgebra $C^{\ast }(\mathcal{S})$ of $\mathcal{A}$
generated by $\mathcal{S}$ coincides with $\mathcal{A}$.

\begin{definition}
A closed two sided $\ast $-ideal $\mathcal{I}$ of $\mathcal{A}$ is called a
local boundary ideal for $\mathcal{S}$ if the canonical map $\sigma :%
\mathcal{A}\rightarrow \mathcal{A}/\mathcal{I}$ is local completely
isometric on $\mathcal{S}$. A local boundary ideal for $\mathcal{S}$ is
called the local Shilov boundary ideal for $\mathcal{S}$ if it contains
every other local boundary ideal.
\end{definition}

We will show that for every separable local operator system the local Shilov
boundary ideal exists. Moreover, it is unique.

\begin{theorem}
\label{A}Let $\mathcal{A}$ be a unital separable locally $C^{\ast }$-algebra
whose topology is given the family of $C^{\ast }$-seminorms $\{p_{\lambda
}\}_{\lambda \in \Lambda }$ and $\mathcal{S}\subseteq \mathcal{A}$ be a
local operator system such that the locally $C^{\ast }$-subalgebra $C^{\ast
}(\mathcal{S})$ of $\mathcal{A}$ generated by $\mathcal{S}$ coincides with $%
\mathcal{A}$. Then $\mathcal{J}=\bigcap\limits_{\pi \in \text{Ch}(\mathcal{S}%
)}\ker \pi $ is the local Shilov boundary ideal for $\mathcal{S}$.
\end{theorem}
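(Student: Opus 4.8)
The plan is to reduce everything to the $C^{\ast }$-algebra level, where the corresponding statement is Arveson's theorem (\cite{A2}, as improved by Kleski \cite{K}): for a separable operator system $\mathcal{S}_{\lambda }$ generating a $C^{\ast }$-algebra $\mathcal{A}_{\lambda }$, the Shilov ideal equals $\bigcap_{\pi _{\lambda }\in \text{Ch}(\mathcal{S}_{\lambda })}\ker \pi _{\lambda }$. First I would observe that $\mathcal{J}=\bigcap_{\pi \in \text{Ch}(\mathcal{S})}\ker \pi $ is a closed two-sided $\ast $-ideal of $\mathcal{A}$ (an intersection of kernels of $\ast $-morphisms), so it makes sense to speak of the quotient $\mathcal{A}/\mathcal{J}$ and, by the discussion preceding the theorem, to identify $\overline{\mathcal{A}/\mathcal{J}}$ with $\lim\limits_{\underset{\lambda }{\leftarrow }}\mathcal{A}_{\lambda }/\mathcal{J}_{\lambda }$, where $\mathcal{J}_{\lambda }$ is the closure of $\pi _{\lambda }^{\mathcal{A}}(\mathcal{J})$ in $\mathcal{A}_{\lambda }$.

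The key bookkeeping step is to identify $\mathcal{J}_{\lambda }$ with the $C^{\ast }$-algebraic Shilov ideal $\mathcal{J}^{\mathcal{S}_{\lambda }}:=\bigcap_{\pi _{\lambda }\in \text{Ch}(\mathcal{S}_{\lambda })}\ker \pi _{\lambda }$ of $\mathcal{S}_{\lambda }$ in $\mathcal{A}_{\lambda }$. Using the bijective correspondence between $\text{Ch}(\mathcal{S}_{\lambda })$ and $\text{Ch}_{\lambda }(\mathcal{S})$ from Proposition \ref{c} (every $\pi _{\lambda }\in \text{Ch}(\mathcal{S}_{\lambda })$ gives $\pi :=\pi _{\lambda }\circ \pi _{\lambda }^{\mathcal{A}}\in \text{Ch}_{\lambda }(\mathcal{S})\subseteq \text{Ch}(\mathcal{S})$, with $\ker \pi =(\pi _{\lambda }^{\mathcal{A}})^{-1}(\ker \pi _{\lambda })$), one checks that $\pi _{\lambda }^{\mathcal{A}}(\mathcal{J})\subseteq \mathcal{J}^{\mathcal{S}_{\lambda }}$, and conversely, because $\text{Ch}(\mathcal{S})=\bigcup_{\mu }\text{Ch}_{\mu }(\mathcal{S})$ and $\text{Ch}_{\mu }(\mathcal{S})\subseteq \text{Ch}_{\lambda }(\mathcal{S})$ whenever $\mu \leq \lambda $ (since a bound by $p_{\mu }$ implies a bound by $p_{\lambda }$), the representations of $\mathcal{A}$ in $\text{Ch}_{\lambda }(\mathcal{S})$ are exactly those that factor through $\pi _{\lambda }^{\mathcal{A}}$, which yields $\pi _{\lambda }^{\mathcal{A}}(\mathcal{J})=\mathcal{J}^{\mathcal{S}_{\lambda }}$ and hence (this ideal being already closed, as a finite-codimension-type argument or simply as a kernel intersection in a $C^{\ast }$-algebra it is norm-closed) $\mathcal{J}_{\lambda }=\mathcal{J}^{\mathcal{S}_{\lambda }}$. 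With $\mathcal{S}$ separable, each $\mathcal{S}_{\lambda }$ is separable, so Arveson--Kleski applies: $\sigma _{\lambda }:\mathcal{A}_{\lambda }\rightarrow \mathcal{A}_{\lambda }/\mathcal{J}_{\lambda }$ is completely isometric on $\mathcal{S}_{\lambda }$, and $\mathcal{J}^{\mathcal{S}_{\lambda }}$ contains every boundary ideal for $\mathcal{S}_{\lambda }$.

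Next I would assemble these facts across $\lambda $. Since $\sigma =\lim\limits_{\underset{\lambda }{\leftarrow }}\sigma _{\lambda }$ and each $\sigma _{\lambda }$ is completely isometric on $\mathcal{S}_{\lambda }$, Proposition on inverse limits of completely isometric maps (the one following Proposition \ref{x}) gives that $\sigma $ is local completely isometric on $\mathcal{S}$; thus $\mathcal{J}$ is a local boundary ideal for $\mathcal{S}$. For maximality, let $\mathcal{I}$ be any local boundary ideal for $\mathcal{S}$. Then the quotient map $\mathcal{A}\rightarrow \mathcal{A}/\mathcal{I}$ is local completely isometric on $\mathcal{S}$, so for each $\lambda $ the induced map $\mathcal{A}_{\lambda }\rightarrow \mathcal{A}_{\lambda }/\mathcal{I}_{\lambda }$ is completely isometric on $\mathcal{S}_{\lambda }$; that is, $\mathcal{I}_{\lambda }$ is a boundary ideal for $\mathcal{S}_{\lambda }$, whence $\mathcal{I}_{\lambda }\subseteq \mathcal{J}_{\lambda }$ by Arveson--Kleski. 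Pulling back through $\pi _{\lambda }^{\mathcal{A}}$ for every $\lambda $ and using $\mathcal{I}=\bigcap_{\lambda }(\pi _{\lambda }^{\mathcal{A}})^{-1}(\mathcal{I}_{\lambda })$ (and likewise for $\mathcal{J}$), we get $\mathcal{I}\subseteq \mathcal{J}$. This shows $\mathcal{J}$ contains every other local boundary ideal, so it is the local Shilov boundary ideal; uniqueness is then immediate.

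I expect the main obstacle to be the precise identification $\mathcal{J}_{\lambda }=\mathcal{J}^{\mathcal{S}_{\lambda }}$, specifically the inclusion showing $\pi _{\lambda }^{\mathcal{A}}(\mathcal{J})$ is large enough — one must be careful that intersecting over \emph{all} of $\text{Ch}(\mathcal{S})$ (including representations bounded only by $p_{\mu }$ for $\mu \not\leq \lambda $) does not shrink the image below $\mathcal{J}^{\mathcal{S}_{\lambda }}$; the monotonicity $\text{Ch}_{\mu }(\mathcal{S})\subseteq \text{Ch}_{\lambda }(\mathcal{S})$ for $\mu \leq \lambda $ together with the directedness of $\Lambda $ handles this, but it needs to be spelled out. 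A secondary technical point is ensuring $\mathcal{J}^{\mathcal{S}_{\lambda }}$ is already closed in $\mathcal{A}_{\lambda }$ so that no extra closure is needed when identifying it with $\mathcal{J}_{\lambda }$; this is automatic since it is an intersection of kernels of representations of a $C^{\ast }$-algebra.
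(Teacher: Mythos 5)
Your overall strategy is the same as the paper's: reduce to each $C^{\ast}$-algebra $\mathcal{A}_{\lambda}$, invoke Arveson--Kleski for the separable operator system $\mathcal{S}_{\lambda}$, and assemble the levelwise statements using the correspondence between $\mathrm{Ch}(\mathcal{S}_{\lambda})$ and $\mathrm{Ch}_{\lambda}(\mathcal{S})$. The one place where you diverge is also the place where your argument has a real gap: you insist on the exact identification $\pi_{\lambda}^{\mathcal{A}}(\mathcal{J})=\mathcal{J}^{\mathcal{S}_{\lambda}}$, and your justification of the hard inclusion $\mathcal{J}^{\mathcal{S}_{\lambda}}\subseteq\pi_{\lambda}^{\mathcal{A}}(\mathcal{J})$ does not work as stated. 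Knowing that every representation in $\mathrm{Ch}_{\lambda}(\mathcal{S})$ factors through $\pi_{\lambda}^{\mathcal{A}}$ tells you that $\mathcal{J}\supseteq\bigcap_{\pi\in\mathrm{Ch}_{\lambda}(\mathcal{S})}\ker\pi=(\pi_{\lambda}^{\mathcal{A}})^{-1}(\mathcal{J}^{\mathcal{S}_{\lambda}})$ would follow if $\mathrm{Ch}(\mathcal{S})$ were exhausted by $\mathrm{Ch}_{\lambda}(\mathcal{S})$, but it is not: $\mathcal{J}$ is also cut down by kernels coming from all other levels $\mu$, so to lift a given element of $\mathcal{J}^{\mathcal{S}_{\lambda}}$ into $\mathcal{J}$ you would need a genuine surjectivity-of-inverse-limits argument for the subsystem of ideals (compatibility of the Shilov ideals under the connecting maps $\pi_{\lambda\mu}^{\mathcal{A}}$ plus a lifting result), none of which the monotonicity $\mathrm{Ch}_{\mu}(\mathcal{S})\subseteq\mathrm{Ch}_{\lambda}(\mathcal{S})$ supplies.

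Fortunately the equality is not needed, and this is exactly how the paper avoids the problem. For the boundary-ideal part, only the easy inclusion $\overline{\pi_{\lambda}^{\mathcal{A}}(\mathcal{J})}\subseteq\mathcal{J}^{\mathcal{S}_{\lambda}}$ is used, and then the seminorms are sandwiched:
\begin{equation*}
p_{\lambda}^{n}\left(\left[s_{ij}\right]\right)=\left\Vert\left[\pi_{\lambda}^{\mathcal{A}}\left(s_{ij}\right)+\mathcal{J}^{\mathcal{S}_{\lambda}}\right]\right\Vert\leq\left\Vert\left[\pi_{\lambda}^{\mathcal{A}}\left(s_{ij}\right)+\overline{\pi_{\lambda}^{\mathcal{A}}\left(\mathcal{J}\right)}\right]\right\Vert=\widehat{p}_{\lambda}^{n}\left(\left[s_{ij}+\mathcal{J}\right]\right)\leq p_{\lambda}^{n}\left(\left[s_{ij}\right]\right),
\end{equation*}
since quotienting by the smaller ideal gives the larger quotient norm. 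For maximality, your own computation already only uses $\mathcal{I}_{\lambda}\subseteq\mathcal{J}^{\mathcal{S}_{\lambda}}$ together with $\mathcal{J}=\bigcap_{\lambda}(\pi_{\lambda}^{\mathcal{A}})^{-1}(\mathcal{J}^{\mathcal{S}_{\lambda}})$, which holds by definition of $\mathcal{J}$ and Proposition \ref{c}; you never need the image of $\mathcal{J}$ to fill up $\mathcal{J}^{\mathcal{S}_{\lambda}}$. So: drop the claimed equality, keep only the forward inclusion, and your proof coincides with the paper's.
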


\begin{proof}
First, we show that $\mathcal{J}$ is a local boundary ideal for $\mathcal{S}$%
. Let $\lambda \in \Lambda $. Since $\mathcal{S}$ is separable, $\mathcal{S}%
_{\lambda }$ is separable, and by \cite[Theorem 2.2.3]{A}, $\mathcal{J}%
_{\lambda }=\bigcap\limits_{\pi _{\lambda }\in \text{Ch}(\mathcal{S}%
_{\lambda })}\ker \pi _{\lambda }$ is the Shilov boundary ideal for $%
\mathcal{S}_{\lambda }$, and 
\begin{equation*}
\left\Vert \left[ \pi _{\lambda }^{\mathcal{A}}\left( s_{ij}\right) \right]
_{i,j=1}^{n}\right\Vert _{M_{n}(\mathcal{A}_{\lambda })}=\left\Vert \left[
\pi _{\lambda }^{\mathcal{A}}\left( s_{ij}\right) +\mathcal{J}_{\lambda }%
\right] _{i,j=1}^{n}\right\Vert _{M_{n}(\mathcal{A}_{\lambda }/\mathcal{J}%
_{\lambda })}
\end{equation*}%
for all $\left[ s_{ij}\right] _{i,j=1}^{n}\in M_{n}\left( \mathcal{S}\right) 
$ and for all $n\geq 1$. If $a\in \mathcal{J}$, then $\pi _{\lambda }\left(
\pi _{\lambda }^{\mathcal{A}}\left( a\right) \right) =\left( \pi _{\lambda
}\circ \pi _{\lambda }^{\mathcal{A}}\right) \left( a\right) =0$ for all
boundary representation $\pi _{\lambda }$ for $\mathcal{S}_{\lambda }$ (see
Proposition \ref{c}). Therefore, $\overline{\pi _{\lambda }^{\mathcal{A}%
}\left( \mathcal{J}\right) }\subseteq \mathcal{J}_{\lambda }$, and then 
\begin{eqnarray*}
p_{\lambda }^{n}\left( \left[ s_{ij}\right] _{i,j=1}^{n}\right)
&=&\left\Vert \left[ \pi _{\lambda }^{\mathcal{A}}\left( s_{ij}\right) %
\right] _{i,j=1}^{n}\right\Vert _{M_{n}(\mathcal{A}_{\lambda })}=\left\Vert %
\left[ \pi _{\lambda }^{\mathcal{A}}\left( s_{ij}\right) +\mathcal{J}%
_{\lambda }\right] _{i,j=1}^{n}\right\Vert _{M_{n}(\mathcal{A}_{\lambda }/%
\mathcal{J}_{\lambda })} \\
&\leq &\left\Vert \left[ \pi _{\lambda }^{\mathcal{A}}\left( s_{ij}\right) +%
\overline{\pi _{\lambda }^{\mathcal{A}}\left( \mathcal{J}\right) }\right]
_{i,j=1}^{n}\right\Vert _{M_{n}(\mathcal{A}_{\lambda }/\overline{\pi
_{\lambda }^{\mathcal{A}}\left( \mathcal{J}\right) })}=\widehat{p}_{\lambda
}^{n}\left( \left[ s_{ij}+\mathcal{J}\right] _{i,j=1}^{n}\right) \\
&\leq &p_{\lambda }^{n}\left( \left[ s_{ij}\right] _{i,j=1}^{n}\right)
\end{eqnarray*}%
for all $\left[ s_{ij}\right] _{i,j=1}^{n}\in M_{n}\left( \mathcal{S}\right) 
$ and for all $n\geq 1$. Therefore,%
\begin{equation*}
\widehat{p}_{\lambda }^{n}\left( \left[ s_{ij}+\mathcal{J}\right]
_{i,j=1}^{n}\right) =p_{\lambda }^{n}\left( \left[ s_{ij}\right]
_{i,j=1}^{n}\right)
\end{equation*}%
for all $\left[ s_{ij}\right] _{i,j=1}^{n}\in M_{n}\left( \mathcal{S}\right) 
$ and for all $n\geq 1$. Consequently, $\mathcal{J}$ is a local boundary
ideal.

Now, show that $\mathcal{J}$ is the largest local boundary ideal for $%
\mathcal{S}$. First, we note that by Proposition \ref{c}, 
\begin{equation*}
\mathcal{J}=\bigcap\limits_{\pi \in \text{Ch}(\mathcal{S})}\ker \pi
=\bigcap\limits_{\lambda \in \Lambda }\bigcap\limits_{\pi \in \text{Ch}%
_{\lambda }(\mathcal{S})}\ker \pi .
\end{equation*}%
If $\mathcal{I}$ is a local boundary ideal for $\mathcal{S}$, then, since
for each $\lambda \in \Lambda $, the canonical map from $\mathcal{A}%
_{\lambda }$ to $\mathcal{A}_{\lambda }/$ $\overline{\pi _{\lambda }^{%
\mathcal{A}}\left( \mathcal{I}\right) }$ is completely isometric, for each $%
\lambda \in \Lambda ,$ $\overline{\pi _{\lambda }^{\mathcal{A}}\left( 
\mathcal{I}\right) }$ is a boundary ideal for $\mathcal{S}_{\lambda },$ and
so, $\overline{\pi _{\lambda }^{\mathcal{A}}\left( \mathcal{I}\right) }%
\subseteq \mathcal{J}_{\lambda }$. Consequently, $\mathcal{I}\subseteq \ker
\left( \pi _{\lambda }\circ \pi _{\lambda }^{\mathcal{A}}\right) $ for all
boundary representation $\pi _{\lambda }$ of $\mathcal{S}_{\lambda }$ and
for all $\lambda \in \Lambda $. Therefore, $\mathcal{I\subseteq J}$, and so $%
\mathcal{J}$ is the largest local boundary ideal for $\mathcal{S}$.
\end{proof}

\begin{proposition}
\label{C} Let $\mathcal{A}$ be a unital separable locally $C^{\ast }$%
-algebra whose topology is given the family of $C^{\ast }$-seminorms $%
\{p_{\lambda }\}_{\lambda \in \Lambda },\mathcal{S}\subseteq \mathcal{A}$ be
a local operator system such that it generates $\mathcal{A}$ and $\mathcal{J}
$ be the local Shilov boundary for $\mathcal{S}$. If $\sigma $ is the
canonical map from $\mathcal{A}$ to $\mathcal{A}/\mathcal{J}$, then $\sigma
\left( \mathcal{S}\right) $ is a separable local operator system in $%
\overline{\mathcal{A}/\mathcal{J}}$ whose local Shilov boundary ideal is $%
\{0\}$.
\end{proposition}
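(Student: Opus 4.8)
The plan is to verify the three assertions in turn, reducing everything to the $C^{\ast }$-level where the corresponding facts about Shilov ideals are classical (Arveson \cite{A}, \cite{A2}).

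First, separability of $\sigma (\mathcal{S})$ is immediate: $\sigma $ is continuous and surjective onto its image, so $\sigma (\mathcal{S})$ is the continuous image of a separable space, hence separable. That $\sigma (\mathcal{S})$ is a local operator system in $\overline{\mathcal{A}/\mathcal{J}}$ follows because $\sigma $ is a unital $\ast $-morphism, so $\sigma (\mathcal{S})$ is a self-adjoint subspace containing the unit; and since $\mathcal{S}$ generates $\mathcal{A}$, the image $\sigma (\mathcal{S})$ generates $\sigma (\mathcal{A})$, which is dense in $\overline{\mathcal{A}/\mathcal{J}}$, so $C^{\ast }(\sigma (\mathcal{S}))=\overline{\mathcal{A}/\mathcal{J}}$. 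Here I would use the identification from the preceding discussion, namely $\overline{\mathcal{A}/\mathcal{J}}\cong \lim\limits_{\underset{\lambda }{\leftarrow }}\mathcal{A}_{\lambda }/\mathcal{J}_{\lambda }$ with $\sigma =\lim\limits_{\underset{\lambda }{\leftarrow }}\sigma _{\lambda }$, and the fact (established inside the proof of Theorem \ref{A}) that $\overline{\pi _{\lambda }^{\mathcal{A}}(\mathcal{J})}=\mathcal{J}_{\lambda }$, the Shilov ideal for $\mathcal{S}_{\lambda }$.

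The main point is that the local Shilov boundary ideal of $\sigma (\mathcal{S})$ in $\overline{\mathcal{A}/\mathcal{J}}$ is $\{0\}$. By Theorem \ref{A} applied to the separable locally $C^{\ast }$-algebra $\overline{\mathcal{A}/\mathcal{J}}$ and the local operator system $\sigma (\mathcal{S})$, this ideal equals $\bigcap_{\rho \in \mathrm{Ch}(\sigma (\mathcal{S}))}\ker \rho $, and by Proposition \ref{c} it suffices to show that at each level $\lambda $ the Shilov ideal of $(\sigma (\mathcal{S}))_{\lambda }=\sigma _{\lambda }(\mathcal{S}_{\lambda })$ in $\mathcal{A}_{\lambda }/\mathcal{J}_{\lambda }$ is zero. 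But this is precisely the classical statement (Arveson \cite{A}, \cite{A2}) that, having quotiented a separable operator system by its Shilov ideal, the Shilov ideal of the quotient operator system $\mathcal{S}_{\lambda }/\mathcal{J}_{\lambda }\subseteq \mathcal{A}_{\lambda }/\mathcal{J}_{\lambda }$ is trivial; equivalently, $\sigma _{\lambda }$ restricted to $\mathcal{S}_{\lambda }$ is completely isometric and no nonzero ideal of $\mathcal{A}_{\lambda }/\mathcal{J}_{\lambda }$ can be quotiented out while preserving complete isometry on $\mathcal{S}_{\lambda }/\mathcal{J}_{\lambda }$. Intersecting over $\lambda \in \Lambda $ and using the projective-limit description of ideals in $\overline{\mathcal{A}/\mathcal{J}}$ then yields that the local Shilov boundary ideal of $\sigma (\mathcal{S})$ is $\{0\}$.

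The step I expect to require the most care is the bookkeeping between the ideal $\mathcal{J}$ of $\mathcal{A}$, its images $\mathcal{J}_{\lambda }$ in the $C^{\ast }$-algebras $\mathcal{A}_{\lambda }$, and the ideals of the quotient $\overline{\mathcal{A}/\mathcal{J}}$: one must check that a closed two-sided $\ast $-ideal $\mathcal{K}$ of $\overline{\mathcal{A}/\mathcal{J}}$ is a local boundary ideal for $\sigma (\mathcal{S})$ if and only if, for every $\lambda $, $\overline{\pi _{\lambda }^{\overline{\mathcal{A}/\mathcal{J}}}(\mathcal{K})}$ is a boundary ideal for $\sigma _{\lambda }(\mathcal{S}_{\lambda })$, and that the latter forces $\overline{\pi _{\lambda }^{\overline{\mathcal{A}/\mathcal{J}}}(\mathcal{K})}=\{0\}$ by triviality of the Shilov ideal downstairs; combined with the identification of $\overline{\mathcal{A}/\mathcal{J}}$ with the projective limit $\lim\limits_{\underset{\lambda }{\leftarrow }}\mathcal{A}_{\lambda }/\mathcal{J}_{\lambda }$, this gives $\mathcal{K}=\{0\}$. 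This is exactly the argument in the second half of the proof of Theorem \ref{A}, now carried out one level down, so I would phrase it by invoking that theorem together with Proposition \ref{c} rather than repeating the estimates.
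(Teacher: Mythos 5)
Your handling of the first assertions (separability, self-adjointness, unit, generation of $\overline{\mathcal{A}/\mathcal{J}}$) is fine and matches what the paper dismisses as "clear". The gap is in the main step. You reduce to showing that, for each $\lambda $, the Shilov ideal of $\sigma _{\lambda }(\mathcal{S}_{\lambda })$ in $\bigl(\overline{\mathcal{A}/\mathcal{J}}\bigr)_{\lambda }$ is zero, and you justify this by identifying $\bigl(\overline{\mathcal{A}/\mathcal{J}}\bigr)_{\lambda }$ with $\mathcal{A}_{\lambda }/\mathcal{J}_{\lambda }$, where $\mathcal{J}_{\lambda }$ is the Shilov ideal of $\mathcal{S}_{\lambda }$, claiming that $\overline{\pi _{\lambda }^{\mathcal{A}}(\mathcal{J})}=\mathcal{J}_{\lambda }$ is "established inside the proof of Theorem \ref{A}". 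It is not: that proof only establishes the inclusion $\overline{\pi _{\lambda }^{\mathcal{A}}(\mathcal{J})}\subseteq \mathcal{J}_{\lambda }$, and the reverse inclusion is false in general, because $\mathcal{J}$ is cut down by admissible boundary representations living at \emph{all} levels $\mu $, whose kernels need not contain $(\pi _{\lambda }^{\mathcal{A}})^{-1}(\mathcal{J}_{\lambda })$. Concretely, take $\mathcal{A}=C(\overline{\mathbb{D}})$ with $\Lambda =\{1,2\}$, $p_{2}=\left\Vert \cdot \right\Vert _{\infty }$, $p_{1}(f)=|f(0)|$, and $\mathcal{S}=\mathrm{span}\{1,z,\bar{z}\}$. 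Then $\mathcal{J}_{2}=\{f:f|_{\mathbb{T}}=0\}$, while $\mathcal{A}_{1}=\mathbb{C}$ forces every admissible local boundary representation at level $1$ to be $\mathrm{ev}_{0}$, so $\mathcal{J}=\{f:f|_{\mathbb{T}\cup \{0\}}=0\}\subsetneq \mathcal{J}_{2}$. Consequently $\bigl(\overline{\mathcal{A}/\mathcal{J}}\bigr)_{2}=C(\mathbb{T}\cup \{0\})$, and the Shilov ideal of $\sigma _{2}(\mathcal{S}_{2})$ there is the \emph{nonzero} ideal of functions vanishing on $\mathbb{T}$. So your intermediate claim "each level-$\lambda $ Shilov ideal downstairs is trivial" fails, and with it the step "$\overline{\pi _{\lambda }(\mathcal{K})}$ is a boundary ideal, hence zero" for a putative local boundary ideal $\mathcal{K}$ of $\sigma (\mathcal{S})$.

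The conclusion survives (in the example, $\mathcal{K}$ must be contained in $\{g:g|_{\mathbb{T}}=0\}\cap \{g:g(0)=0\}=\{0\}$), but only because the constraints from \emph{all} levels are intersected; no single level does the job. This is why the paper does not argue level by level here: it invokes Proposition \ref{B} to transfer each admissible local boundary representation $\pi $ of $\mathcal{S}$ along the local complete isometry $\sigma |_{\mathcal{S}}$ to one, $\rho _{\pi }$, of $\sigma (\mathcal{S})$ with $\rho _{\pi }\circ \sigma =\pi $ on $\mathcal{A}$, and then applies Theorem \ref{A} to $\sigma (\mathcal{S})$: its local Shilov ideal is contained in $\bigcap_{\pi }\ker \rho _{\pi }$, which pulls back under $\sigma $ into $\bigcap_{\pi }\ker \pi =\mathcal{J}=\ker \sigma $ and hence vanishes. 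If you want to keep a level-wise bookkeeping, you must work with $\overline{\pi _{\lambda }^{\mathcal{A}}(\mathcal{J})}$ rather than $\mathcal{J}_{\lambda }$ throughout and show that the pullbacks of the (possibly nonzero) level-wise Shilov ideals $\mathcal{J}_{\lambda }/\overline{\pi _{\lambda }^{\mathcal{A}}(\mathcal{J})}$ have trivial intersection in $\overline{\mathcal{A}/\mathcal{J}}$; that is an additional argument your proposal does not supply.
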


\begin{proof}
Clearly, $\sigma \left( \mathcal{S}\right) $ is a separable local operator
system in $\overline{\mathcal{A}/\mathcal{J}}$ and generates $\overline{%
\mathcal{A}/\mathcal{J}}$ . Using Proposition \ref{B} and Theorem \ref{A},
in a similar way to the proof of \cite[Proposition 2.2.4]{A}, we obtain that
the local Shilov boundary ideal for $\ \sigma \left( \mathcal{S}\right) $ is
trivial.
\end{proof}

A separable operator system determines the structure of its generated $%
C^{\ast }$-algebra, once one has factored by the Shilov boundary ideal\ \cite%
[Theorem 2.2.5]{A}. In the case of separable Fr\'{e}chet local operator
systems, we obtain similar result.

\begin{theorem}
\label{D}Let $\mathcal{A}_{1}$ and $\mathcal{A}_{2}$ be two separable unital
Fr\'{e}chet locally $C^{\ast }$-algebras whose topologies are given by the
families of $C^{\ast }$-seminorms $\{p_{n}\}_{n\geq 1}$, respectively $%
\{q_{n}\}_{n\geq 1},$ $\mathcal{S}_{1}\subseteq \mathcal{A}_{1}$ and $%
\mathcal{S}_{2}\subseteq \mathcal{A}_{2}$ be two local operator systems such
that $\mathcal{S}_{1}$ generates $\mathcal{A}_{1}$ and $\mathcal{S}_{2}$
generates $\mathcal{A}_{2}$. Suppose that both $\mathcal{S}_{1}\ $and $%
\mathcal{S}_{2}$ have trivial local Shilov boundary ideals. Then any
surjective unital local completely isometric map from $\mathcal{S}_{1}\ $to $%
\mathcal{S}_{2}$ is implemented by a unital local contractive $\ast $%
-isomorphism from $\mathcal{A}_{1}\ $to $\mathcal{A}_{2}.$
\end{theorem}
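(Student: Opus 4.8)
The plan is to reduce the statement level by level to Arveson's theorem on the $C^{\ast}$-envelope \cite[Theorem 2.2.5]{A} and then to reassemble an inverse limit. Since $\mathcal{A}_{1},\mathcal{A}_{2}$ are Fr\'{e}chet we may assume $p_{1}\leq p_{2}\leq \cdots $ and $q_{1}\leq q_{2}\leq \cdots $, so that $\mathcal{A}_{i}=\lim\limits_{\underset{n}{\leftarrow }}(\mathcal{A}_{i})_{n}$ with canonical surjective connecting $\ast $-morphisms $\pi _{m,n}^{\mathcal{A}_{i}}$, and each $(\mathcal{S}_{i})_{n}$ is a separable operator system generating $(\mathcal{A}_{i})_{n}$. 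By the proposition characterising local completely isometric maps as inverse limits of completely isometric maps, a surjective unital local completely isometric map $\varphi :\mathcal{S}_{1}\rightarrow \mathcal{S}_{2}$ is of the form $\varphi =\lim\limits_{\underset{n}{\leftarrow }}\varphi _{n}$, where each $\varphi _{n}:(\mathcal{S}_{1})_{n}\rightarrow (\mathcal{S}_{2})_{n}$ is a surjective unital completely isometric map and $\pi _{n+1,n}^{\mathcal{A}_{2}}\circ \varphi _{n+1}=\varphi _{n}\circ \pi _{n+1,n}^{\mathcal{A}_{1}}$ on $(\mathcal{S}_{1})_{n+1}$.

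The first substantial step is to push the triviality of the local Shilov boundary ideal down to the quotient $C^{\ast }$-algebras. Write $\mathcal{J}_{i,n}$ for the Shilov boundary ideal of $(\mathcal{S}_{i})_{n}$ in $(\mathcal{A}_{i})_{n}$ (which exists by \cite[Theorem 2.2.3]{A} since $(\mathcal{S}_{i})_{n}$ is separable). Using Proposition \ref{c} together with the inclusions $\mathrm{Ch}_{n}(\mathcal{S}_{i})\subseteq \mathrm{Ch}_{n+1}(\mathcal{S}_{i})$ (valid because $p_{n}\leq p_{n+1}$), one checks that composing a boundary representation of $(\mathcal{S}_{i})_{n}$ with $\pi _{n+1,n}^{\mathcal{A}_{i}}$ yields a boundary representation of $(\mathcal{S}_{i})_{n+1}$; intersecting kernels gives $\pi _{n+1,n}^{\mathcal{A}_{i}}(\mathcal{J}_{i,n+1})\subseteq \mathcal{J}_{i,n}$, so $\{\mathcal{J}_{i,n}\}_{n}$ is an inverse system of $C^{\ast }$-algebras, and by the identity $\mathcal{J}_{i}=\bigcap_{n}(\pi _{n}^{\mathcal{A}_{i}})^{-1}(\mathcal{J}_{i,n})$ established in the proof of Theorem \ref{A} we get $\lim\limits_{\underset{n}{\leftarrow }}\mathcal{J}_{i,n}=\mathcal{J}_{i}=\{0\}$. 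Combined with a Mittag--Leffler argument for the tower $\{(\mathcal{A}_{i})_{n}\}_{n}$ of surjections (this is where separability and the Fr\'{e}chet structure are used), the canonical $\ast $-morphism $\mathcal{A}_{i}\rightarrow \lim\limits_{\underset{n}{\leftarrow }}(\mathcal{A}_{i})_{n}/\mathcal{J}_{i,n}$ is a topological $\ast $-isomorphism; replacing $\mathcal{A}_{i}$ by this inverse limit, we may and do assume that $(\mathcal{S}_{i})_{n}$ has trivial Shilov boundary ideal in $(\mathcal{A}_{i})_{n}$, i.e.\ $(\mathcal{A}_{i})_{n}$ is the $C^{\ast }$-envelope of $(\mathcal{S}_{i})_{n}$, for every $n$.

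Now for each $n$ the map $\varphi _{n}:(\mathcal{S}_{1})_{n}\rightarrow (\mathcal{S}_{2})_{n}$ is a surjective unital completely isometric map between separable operator systems whose generated $C^{\ast }$-algebras are their $C^{\ast }$-envelopes, so by \cite[Theorem 2.2.5]{A} it extends to a unique unital $\ast $-isomorphism $\Phi _{n}:(\mathcal{A}_{1})_{n}\rightarrow (\mathcal{A}_{2})_{n}$. Uniqueness forces $\pi _{n+1,n}^{\mathcal{A}_{2}}\circ \Phi _{n+1}=\Phi _{n}\circ \pi _{n+1,n}^{\mathcal{A}_{1}}$, since both sides are unital $\ast $-morphisms agreeing on the generating set $(\mathcal{S}_{1})_{n+1}$; hence $(\Phi _{n})_{n}$ is an inverse system of $\ast $-isomorphisms and $\Phi :=\lim\limits_{\underset{n}{\leftarrow }}\Phi _{n}:\mathcal{A}_{1}\rightarrow \mathcal{A}_{2}$ is a unital $\ast $-isomorphism. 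Because $\varphi =\lim\limits_{\underset{n}{\leftarrow }}\varphi _{n}$ and each $\Phi _{n}$ extends $\varphi _{n}$, we have $\left. \Phi \right\vert _{\mathcal{S}_{1}}=\varphi $; and $\Phi $ is an inverse limit of (isometric) $\ast $-morphisms, hence a local contractive $\ast $-morphism. Applying the same construction to $\varphi ^{-1}$ produces the inverse of $\Phi $, which completes the proof.

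I expect the main obstacle to be the reduction in the second paragraph: the inclusion $\pi _{n+1,n}^{\mathcal{A}_{i}}(\mathcal{J}_{i,n+1})\subseteq \mathcal{J}_{i,n}$ and the identity $\lim\limits_{\underset{n}{\leftarrow }}\mathcal{J}_{i,n}=\{0\}$ are formal, but deducing from them that $\mathcal{A}_{i}\cong \lim\limits_{\underset{n}{\leftarrow }}(\mathcal{A}_{i})_{n}/\mathcal{J}_{i,n}$ (equivalently, that the derived limit of the tower $\{\mathcal{J}_{i,n}\}_{n}$ vanishes, so that one genuinely reduces to the case in which every $\mathcal{J}_{i,n}$ is trivial) is the delicate point and is exactly where the separability and Fr\'{e}chet hypotheses must be exploited; once this is settled the remaining steps are a routine combination of the classical $C^{\ast }$-envelope theory with inverse limits.
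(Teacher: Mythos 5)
Your route (apply Arveson's implementation theorem \cite[Theorem 2.2.5]{A} levelwise and assemble an inverse limit) is genuinely different from the paper's, but it hinges on a reduction that you have not established and that, as stated, does not follow from the hypotheses. Triviality of the local Shilov ideal means only that $\bigcap_{n}(\pi_{n}^{\mathcal{A}_{i}})^{-1}(\mathcal{J}_{i,n})=\{0\}$; it does not make the levelwise Shilov ideals $\mathcal{J}_{i,n}$ vanish, and your fix --- replacing $\mathcal{A}_{i}$ by $\lim\limits_{\underset{n}{\leftarrow }}(\mathcal{A}_{i})_{n}/\mathcal{J}_{i,n}$ --- has two unproved ingredients. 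First, to even form that inverse limit you need the induced maps $(\mathcal{A}_{i})_{n+1}/\mathcal{J}_{i,n+1}\rightarrow (\mathcal{A}_{i})_{n}/\mathcal{J}_{i,n}$, i.e.\ the inclusion $\pi_{n+1,n}^{\mathcal{A}_{i}}(\mathcal{J}_{i,n+1})\subseteq \mathcal{J}_{i,n}$; you derive this from the claim that a boundary representation of $(\mathcal{S}_{i})_{n}$ pulls back along $\pi_{n+1,n}^{\mathcal{A}_{i}}$ to a boundary representation of $(\mathcal{S}_{i})_{n+1}$, but that is not obvious: a UCP map on $(\mathcal{A}_{i})_{n+1}$ agreeing with $\pi_{n}\circ\pi_{n+1,n}^{\mathcal{A}_{i}}$ on $(\mathcal{S}_{i})_{n+1}$ need not factor through $(\mathcal{A}_{i})_{n}$, so the rigidity of $\pi_{n}$ at level $n$ cannot be invoked directly. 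Second, and more seriously, the canonical map $\mathcal{A}_{i}\rightarrow \lim\limits_{\underset{n}{\leftarrow }}(\mathcal{A}_{i})_{n}/\mathcal{J}_{i,n}$ is injective (its kernel is exactly the local Shilov ideal) but its surjectivity is precisely the vanishing of $\varprojlim^{1}\mathcal{J}_{i,n}$ for the tower of ideals with the restricted connecting maps. The Mittag--Leffler criterion needs the images $\pi_{m,n}^{\mathcal{A}_{i}}(\mathcal{J}_{i,m})$ to stabilize (or at least be dense) in $\mathcal{J}_{i,n}$, and neither separability nor the Fr\'{e}chet hypothesis supplies this: those hypotheses guarantee $\varprojlim^{1}=0$ for the tower $\{(\mathcal{A}_{i})_{n}\}$ of surjections, not for the subtower of Shilov ideals. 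You flag this as the delicate point, but it is not a technicality to be smoothed over --- without it the entire reduction, and hence the levelwise application of the $C^{\ast}$-envelope theorem, collapses.

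The paper avoids this obstruction entirely. For each $n$ it forms the direct sum $(\Psi_{i})_{n}$ of \emph{all} admissible local boundary representations dominated by $p_{n}$ (resp.\ $q_{n}$), assembles these over $n$ into local contractive $\ast$-morphisms $\rho_{i}:\mathcal{A}_{i}\rightarrow C^{\ast}(\mathcal{D}_{\mathcal{E}})$ on a common Fr\'{e}chet quantized domain satisfying $\rho_{2}\circ\varphi=\left.\rho_{1}\right\vert_{\mathcal{S}_{1}}$ (via Proposition \ref{B}), and uses the triviality of the local Shilov ideals only once, to conclude that $\ker\rho_{i}=\{0\}$; the implementing isomorphism is then $\widetilde{\varphi}=\rho_{2}^{-1}\circ\rho_{1}$. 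In particular the paper never needs any $\mathcal{J}_{i,n}$ to be trivial, nor any $\varprojlim^{1}$ statement. If you want to salvage your approach you would have to prove the two missing ingredients above; as it stands the proposal has a genuine gap.
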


\begin{proof}
Suppose that $\varphi :\mathcal{S}_{1}\rightarrow $ $\mathcal{S}_{2}$ is a
unital surjective local completely isometric map. We denote by $\mathcal{BR}%
_{n}\left( \mathcal{S}_{i}\right) $, the set of all equivalence classes of
admissible boundary representations $\left( \pi _{i}\right) _{n}$ for $%
\mathcal{S}_{i},$ $i=1,2,$ such that $\left\Vert \left( \pi _{1}\right)
_{n}\left( a_{1}\right) \right\Vert \leq p_{n}\left( a_{1}\right) $ for all $%
a_{1}\in \mathcal{A}_{1}$, respectively $\left\Vert \left( \pi _{2}\right)
_{n}\left( a_{2}\right) \right\Vert \leq q_{n}\left( a_{2}\right) $ for all $%
a_{2}\in \mathcal{A}_{2}$. By Proposition \ref{B}, for each $\left( \pi
_{1}\right) _{n}\in \mathcal{BR}_{n}\left( \mathcal{S}_{1}\right) ,$ there
exists $\left( \pi _{2}\right) _{n}\in \mathcal{BR}_{n}\left( \mathcal{S}%
_{2}\right) $ such that $\left( \pi _{2}\right) _{n}\circ \varphi =\left.
\left( \pi _{1}\right) _{n}\right\vert _{\mathcal{S}_{1}}$. Let 
\begin{equation*}
\left( \Psi _{1}\right) _{n}:=\bigoplus\limits_{\left( \pi _{1}\right)
_{n}\in \mathcal{BR}_{n}\left( \mathcal{S}_{1}\right) }\left( \pi
_{1}\right) _{n}\text{ and }\left( \Psi _{2}\right)
_{n}:=\bigoplus\limits_{\left( \pi _{2}\right) _{n}\in \mathcal{BR}%
_{n}\left( \mathcal{S}_{2}\right) }\left( \pi _{2}\right) _{n}.
\end{equation*}
Clearly, $\left( \Psi _{1}\right) _{n}$ and $\left( \Psi _{2}\right) _{n}$
are local representations of $\mathcal{A}_{1}$, respectively $\mathcal{A}%
_{2} $ on a Hilbert space $H_{n}$ and $\left( \Psi _{2}\right) _{n}\circ
\varphi =\left. \left( \Psi _{1}\right) _{n}\right\vert _{\mathcal{S}_{1}}.$

For each $n\geq 1$, set $\mathcal{H}_{n}:=\bigoplus\limits_{m\leq n}H_{m}$,
and for each $a_{i}\in \mathcal{A}_{i}$, consider the map $\left( \rho
_{i}\right) _{n}\left( a_{i}\right) :\mathcal{H}_{n}\rightarrow \mathcal{H}%
_{n}$ defined by 
\begin{equation*}
\left( \rho _{i}\right) _{n}\left( a_{i}\right) \left(
\bigoplus\limits_{m\leq n}\xi _{m}\right) =\bigoplus\limits_{n\leq m}\left(
\Psi _{i}\right) _{m}\left( a_{i}\right) \left( \xi _{m}\right)
\end{equation*}%
$i=1,2$. Clearly, $\left( \rho _{i}\right) _{n}$ is a local representation
of $\mathcal{A}_{i}$ on $\mathcal{H}_{n},i=1,2$, and $\left( \rho
_{2}\right) _{n}\circ \varphi =\left. \left( \rho _{1}\right)
_{n}\right\vert _{\mathcal{S}_{1}}.$

Moreover, the family $\{\mathcal{H}_{n}\}_{n\geq 1}$ of Hilbert spaces is
directed and its union $\mathcal{D}_{\mathcal{E}}$ is a pre-Hilbert space.
If $\mathcal{H} $ is the Hilbert space obtained by the completion of the
pre-Hilbert space $\mathcal{D}_{\mathcal{E}}$, then $\{\mathcal{H},\mathcal{E%
}=\{\mathcal{H}_{n}\}_{n\geq 1},\mathcal{D}_{\mathcal{E}}\}$ is a Fr\'{e}%
chet quantized domain, and for each $a_{i}\in \mathcal{A}_{i},$ the map $%
\rho _{i}\left( a_{i}\right) :\mathcal{D}_{\mathcal{E}}\rightarrow \mathcal{D%
}_{\mathcal{E}}$ defined by 
\begin{equation*}
\rho _{i}\left( a_{i}\right) \eta _{n}=\left( \rho _{i}\right) _{n}\left(
a_{i}\right) \eta _{n},\eta _{n}\in \mathcal{H}_{n}
\end{equation*}%
is an element in $C^{\ast }\left( \mathcal{D}_{\mathcal{E}}\right) ,$ $i=1,2$%
. In this way, we obtain an admissible\textbf{\ }local contractive $\ast $%
-morphiam $\rho _{i}:\mathcal{A}_{i}\rightarrow C^{\ast }\left( \mathcal{D}_{%
\mathcal{E}}\right) ,i=1,2.$ Moreover, $\rho _{2}\circ \varphi =\left. \rho
_{1}\right\vert _{\mathcal{S}_{1}}$. Since both $\mathcal{S}_{1}\ $and $%
\mathcal{S}_{2}$ have trivial local Shilov boundary ideals, 
\begin{equation*}
\bigcap\limits_{n\geq 1}\bigcap\limits_{\left( \psi _{1}\right) _{n}\in 
\mathcal{BR}_{n}\left( \mathcal{S}_{1}\right) }\ker \left( \psi _{1}\right)
_{n}=\{0\}=\bigcap\limits_{n\geq 1}\bigcap\limits_{\left( \psi _{2}\right)
_{n}\in \mathcal{BR}_{n}\left( \mathcal{S}_{2}\right) }\ker \left( \psi
_{2}\right) _{n}.
\end{equation*}%
Therefore, $\rho _{1}$ and $\rho _{2}$ are injective admissible local
contractive $\ast $-morphisms. Moreover, 
\begin{equation*}
\overline{\rho _{2}\left( \mathcal{A}_{2}\right) }=\overline{\rho _{2}\left(
C^{\ast }(\mathcal{S}_{2}\right) }=\overline{\rho _{2}\left( C^{\ast
}(\varphi \left( \mathcal{S}_{1}\right) \right) }=C^{\ast }(\rho _{1}\left( 
\mathcal{S}_{1}\right) )=\overline{\rho _{1}\left( \mathcal{A}_{1}\right) }
\end{equation*}%
and then, $\rho _{2}:\mathcal{A}_{2}\rightarrow \overline{\rho _{1}\left( 
\mathcal{A}_{1}\right) }$ is an invertible unital local contractive $\ast $%
-morphism. Since $\overline{\rho _{1}\left( \mathcal{A}_{1}\right) }$ is a Fr%
\'{e}chet locally $C^{\ast }$-algebra, $\rho _{2}^{-1}:\overline{\rho
_{1}\left( \mathcal{A}_{1}\right) }\ \rightarrow \mathcal{A}_{2}$ is a
unital local contractive $\ast $-morphism. Let $\widetilde{\varphi }:=\rho
_{2}^{-1}\circ \rho _{1}$. Clearly, $\widetilde{\varphi }$ is a unital%
\textbf{\ }local contractive $\ast $-isomorphism from $\mathcal{A}_{1}$ to $%
\mathcal{A}_{2}$, and 
\begin{equation*}
\left. \widetilde{\varphi }\right\vert _{\mathcal{S}_{1}}=\rho
_{2}^{-1}\circ \left. \rho _{1}\right\vert _{\mathcal{S}_{1}}=\varphi \text{.%
}
\end{equation*}
\end{proof}

\end{document}